\def\one{\mbox{1\hspace{-4.25pt}\fontsize{12}{14.4}\selectfont\textrm{1}}} 
\newenvironment{dedication}
{\vspace{.1cm}\begin{quotation}\begin{center}\begin{em}}
			{\par\end{em}\end{center}\end{quotation}\vspace{1cm}}
\DeclareMathOperator{\E}{\mathbb{E}}
\newtheorem{theorem}{Theorem}[section]
\newtheorem{corollary}{Corollary}[theorem]
\newtheorem{definition}{\noindent \bf{Definition}}[section]
\newtheorem{remark}{\noindent \bf{Remark}}[section]
\newtheorem{asmpt}{\noindent \bf{Condition}}[section]
\numberwithin{equation}{section}
\begin{document}
	\title{Two queues with random time-limited polling}
	\author{Mayank Saxena\thanks{Department of Mathematics and Computer Science, Eindhoven
			University of Technology,  The
			Netherlands}\, \thanks{Email: m.mayank@tue.nl}
		\and Onno Boxma$^*$\thanks{Email: o.j.boxma@tue.nl} \and Stella Kapodistria$^*$\thanks{Email: s.kapodistria@tue.nl}
		\and Rudesindo N\'u\~nez Queija \thanks{Korteweg-de Vries Institute for Mathematics, University of Amsterdam, The Netherlands, Email: nunezqueija@uva.nl}\,	\thanks{CWI, Amsterdam, The Netherlands}	}

	\maketitle
	
\begin{dedication}This paper is dedicated to Tomasz Rolski, in friendship, 
		respect and admiration. His love of applied probability and 
		never-ending curiosity are a blessing for our field.
\end{dedication}

\noindent
\textit{Keywords}: {polling model; workload decomposition; heavy traffic; heavy tail asymptotics; singular perturbation analysis; time-scale separation;
	geometric ergodicity}                                   


\begin{abstract}
In this paper, we analyse a single server polling model with two queues. Customers arrive at the two queues according to two independent Poisson processes. There is a single server that serves both queues with generally distributed service times. The server spends an exponentially distributed amount of time in each queue. After the completion of this residing time, the server instantaneously switches to the other queue, i.e., there is no switch-over time. For this polling model we derive the steady-state marginal workload distribution, as well as heavy traffic and heavy tail asymptotic results. Furthermore, we also calculate the joint queue length distribution for the special case of exponentially distributed service times using singular perturbation analysis.
\end{abstract}
\section{Introduction}
In this paper, we are interested in the performance analysis of a single server polling model with a special service discipline (i.e., the criterion which determines how many customers are served during a visit of the server to a queue).
A typical  polling model consists of multiple queues, attended by a single server who visits the queues in some order to render service to the customers waiting
at the queues. Moving from one queue to another, the server incurs a (possibly zero) switch-over time. Once the server is at one of the queues, the server serves the customers of that queue based on a service discipline and according to some service time distribution.

Polling models were initially introduced in the 1950's but mostly gained their popularity during the 1990's. This popularity rise was due to the wide range of applicability of polling models, especially for the modelling of  computer-communica-tion systems and protocols, traffic signal management, and manufacturing, see, e.g., \cite{Takagi-book,Takagi-00,vishnevskii2006mathematical} for a series of comprehensive surveys and \cite{Boon-Mei-Winards,levy1991polling,Takagi-91} for extensive overviews of the applicability of polling systems.

The performance analysis of polling models has received considerable attention, see, e.g., \cite{takagi1988queuing}.
In particular, in the polling literature much attention has been given to determining the probability generating function (PGF) of the joint queue length distribution under stationarity and at various epochs.
A wide range of service disciplines has been considered, including {\em exhaustive service}
(per visit to a queue, the server continues to serve all customers until it empties)
and {\em gated service} (per visit to a queue, the server serves only those customers
which are already present at the start of the visit).
In \cite{Resing}, Resing shows that the joint queue length PGF of polling models
in which the service discipline satisfies the so-called branching property
equals the (known) PGF of
a multi-type branching process with immigration. Service disciplines which satisfy the branching property include
the exhaustive and gated disciplines. Polling systems with disciplines which do not satisfy the branching property usually defy an exact analysis. In our paper, we assume that the server spends an exponentially distributed amount of time at each queue. Upon the completion of this residing time at each queue, the server instantaneously switches to another queue according to a cyclic order. Such a service protocol does not exhibit the branching property, which complicates the analysis significantly. We concentrate on the two-queue model and, whenever possible, suggest extensions to the multi-queue model. A similar service discipline has been considered in \cite{EliazarYechaili}, \cite{AlHanbali}, \cite{xie1997workload}, \cite{de2009polling}, and the references therein.

\paragraph{Related literature}
In \cite{EliazarYechaili}, Eliazar and Yechiali consider a multi-queue polling system under the Randomly Timed Gated (RTG) service discipline. The RTG discipline operates as follows: whenever the server enters a station, a timer is activated. If the server empties the queue before the timer's expiration, the server moves on to the next queue. Otherwise (i.e., if there is still work in the station when the timer expires), the server obeys one of the following rules, each leading to a different model: (1) The server completes all the work accumulated up to the timer's expiration and then moves on to the next node. (2) The server completes only the service of the job currently being served, and moves on. (3) The server stops working immediately and moves on. The model suggested in this manuscript bears resemblance to rule  (3), however, in our case if a queue becomes empty, the server does not switch, and only does so when the timer expires. Eliazar and Yechiali, in \cite{EliazarYechaili}, produce a recursive expression for the PGF of the number of customers in the queues of the polling model, while the case of two queues is sketched in \cite{Coffmanetal}, by a transformation to a boundary value problem. In \cite{Katayama2001}, Katayama using a level-crossing approach, obtains the Laplace-Stieltjes transforms (LSTs) and the moment formulas for the waiting times and the sojourn times, and based on these expressions, he also proves a decomposition property.

In \cite{AlHanbali}, the authors consider a polling model with Poisson batch arrivals and phase-type service times, and an exponential service timer. The authors establish a relation for the PGF of the number of customers in the queue at the beginning and at the end of the server's visit to a queue.  This is used as an input for a numerical scheme that is used to approximate the joint queue-length distribution at the server departure instants from the queues.

In \cite{xie1997workload}, Xie et al.\ consider a single server multi-queue system, in which the server visits the individual queues for a fixed amount of time in a deterministic, cyclic order. They refer to the fixed residing time as the {\em orientation} time. They argue that such a service discipline comes with two operational advantages: it enables to i) keep the frequency of switching at a predetermined level (thus controlling the total cost, if there is a switching cost), ii) balance the time that the server spends in each queue (since, contrary to exhaustive or gated service disciplines, this discipline does not depend on the number of customers present in the various queues).

In \cite{de2009polling}, the authors assume a random visit (residing) time for each queue, which is independent of the number of customers present at each queue, and a {\em preemptive-repeat with resampling} service strategy. This autonomous service discipline is motivated from wireless ad hoc networks with movable communication hops.
Another application is in single upstream tree-based ethernet passive optical networks, in which the central optical line terminal dedicates the channel to a specific user (e.g., the user with the highest priority) for a random amount of time, see \cite{kramer2002ipact} and the references therein. For more applications on this type of autonomous service disciplines, the interested reader is referred to \cite{altman1994analysing}.
For all aforementioned applications, we consider it natural to assume that the service strategy is {\em preemptive-resume} and that the switch-over time is negligible in comparison to the service time and the residing time.

\paragraph{Paper originality}
In this paper, we initially devote attention to the individual queues.
When focusing on a single queue, the model can be interpreted as a service system with {\em vacations}: we interpret the time that the server visits the other queue as a vacation period.
Vacation queues - and {\em priority} queues for which the mathematical analysis is similar - are well studied in the queueing literature starting with the work of White and Christie~\cite{whitechristie58} (exponentially distributed service times and vacations), Gaver\cite{gaver62}, Thiruvengadam~\cite{thiruvengadam63} and Avi-Itzhak and Naor~\cite{avi-naor63} (the latter three assuming generally distributed service times and vacations).
All these works assume that the service periods have an exponential distribution, but vary for example in the assumptions regarding whether interrupted services are resumed or repeated and in the metrics of interest.
Takagi~\cite{Takagi-book} provides an excellent overview of vacation and priority systems.
The interested reader is also referred to Federgruen and Green~\cite{feder-green86} for phase-type distributed service periods, to Takine and Sengupta~\cite{takinesengupta97} for Markovian arrival processes and to Fiems et al.~\cite{fiems-maer-brun08} for a more recent publication with various sorts of service disruptions.
For a more extensive overview of the literature, we refer to the recent survey of Krishnamoorthy et al.~\cite{krishna-etal14}.

A particular feature of a large class of vacation queues is that the stationary workload and queue length distributions obey a stochastic decomposition property, as first observed by Gaver~\cite{gaver62} and Miller~\cite{miller64}.
Fuhrmann and Cooper~\cite{fuhrmancooper85} give conditions for such a queue length decomposition to hold.
Our model does not satisfy these conditions, but we show that it does allow a stochastic
decomposition of the stationary workload. It is notable that the workload can be decomposed into two independent terms:  the amount of work of an M/G/1 queue, and the amount of work when the server is not serving the first queue, due to either an idle period or due to a visit at the second queue. The second term in the decomposition is at first sight surprising, as one would typically expect that the decomposition is identical with that of the standard M/G/1 queue with exhaustive service and multiple vacations, as is the case for the RTG, see \cite[Remarks 2 and 4]{Katayama2001}.
In that respect, the exact analysis of the marginal workload distribution was quite helpful to us.
The use of the decomposition property further facilitates our heavy traffic and heavy tail analyses, as we can use known results for the M/G/1 queue and  restrict our attention to the second term appearing in the stochastic decomposition.

In this paper, we also devote attention to the joint workload distribution. Using similar probabilistic arguments as in the analysis of the marginal workload, we show that the joint LST of the workloads at the queues satisfies a functional equation \eqref{eq6.4}, which is, in the case of identical queues, then reduced to a Dirichlet boundary value problem. Thus, one can numerically evaluate the mapping from the contour defined by the kernel of the functional equation to the unit circle, and obtain a solution to the joint distribution.
However, we have to note that, depending on the service time distribution, the kernel of the functional equation does not have the typical (quadratic) polynomial form, which complicates tremendously the analysis and differentiates it from the known results of the literature, see, e.g., \cite{cohen2000boundary,Coffmanetal,fayolle1999}. In the case of exponentially distributed service times, we propose perturbation analysis for the calculation of the joint queue length distribution, as a methodological alternative to the boundary value problem approach. By appropriately scaling the arrival and service rates by a factor $\varepsilon$, the invariant probability measure of the perturbed Markov chain is written as a
power series expression in terms of $\varepsilon$, whose coefficients form a geometric sequence, that can be used for both exact and numerical calculations. Furthermore, we show that there exists a computationally stable updating formula for the calculation of the perturbed invariant measure.
To approximate the joint distribution numerically, one needs to solve a large system of equations, for which we indicate two possible
approaches, but we do not pursue these in this paper.

\paragraph{Paper overview}
The paper is organized as follows: In Section \ref{ModelDescription}, we describe the two-dimensional polling model under  consideration. In Section \ref{sec:marginal}, we present the LSTs of the model's {\em marginal} workload distributions in steady-state at an arbitrary epoch. In Section \ref{sec:WorkloadDec}, we show that a single queue's marginal workload satisfies a decomposition property and then by using the decomposition property in
the light tailed case we obtain the heavy traffic limit of the marginal workload distributions in steady-state. In Section \ref{sec:Asympt}, we discuss the heavy tail asymptotics of the marginal workload distributions in steady-state, and in that case we also discuss the heavy traffic behavior. We then discuss, in Section \ref{sec:JointWork}, open problems arising in the calculation of the {\em joint} workload distribution.
Assuming exponentially distributed service times, we calculate in Section \ref{sec:JointQueue} the joint queue length distribution in  steady-state at an arbitrary epoch using singular perturbation analysis. Several possible future research directions are discussed in Section \ref{sec:FutureDir}.


\section{Model description and notation}\label{ModelDescription}
In this paper, we consider a two-queue polling model. Customers arrive to queue $i$ according to a Poisson process at rate $\lambda_i$, $i=1,2$. There is a single server, that serves both queues according to the first come first serve (FCFS)  discipline. The service times of customers in queue $i$ are independent and identically generally distributed positive random variables, say $B_i$, $i=1, 2$. We denote the LST of the service time $B_i$ by $\tilde{b}_i(s)=\mathbb{E}(e^{-s B_i})$, with $\mathrm{Re}~s\geq0$, $i = 1, 2$.

A special feature of the polling model under consideration is that the server spends an exponentially distributed amount of time at queue $i$ with rate $c_i$, $i=1, 2$. Upon completion of the residing time at queue $i$, the server instantaneously switches to the other queue, i.e., there is no switch-over time. Furthermore, if upon completion of the residing time, the server is providing service to a customer, this service is interrupted and resumed at the next visit of the server to the queue. More explicitly, we assume that if a server resumes the service after being interrupted, the server continues from where the service stopped instead of starting from the beginning, i.e., the service is \textit{preemptive--resume}. We denote the LST of the residing time $T_i$ of the server in queue $i$ by $\tilde{f}_{T_i}(s)=\mathbb{E}(e^{-sT_i})$, with $\mathrm{Re}~s\geq0$, $i = 1, 2$, with $T_i$ exponentially distributed with rate $c_i$ and probability density function $f_{T_i}(t)= c_i e^{-c_i t}$, $t\geq0$, $i=1,2$.

\paragraph{Stability condition} For the two-queue polling model under consideration the stability condition (sufficient and necessary) is
\begin{equation}\label{Stability}
\rho_1< \frac{c_2}{c_1 + c_2} ~ \text{ and }~  \rho_2< \frac{c_1}{c_1 + c_2},
\end{equation}
with $\rho_i=\lambda_i\, \mathbb{E}(B_i)$, $i=1,2$.

The stability condition can be proven by appropriately adapting and extending the proof of Altman et al. in \cite{Altmanetal}. To this purpose, one would need to calculate the expected increase of the workload during a cycle  (i.e., the time between two successive arrivals of the server at the first queue) and use an extension of Foster's criterion known as {\em the positivity/regularity criterion (V2)}, cf. \cite{Meyn}. The derivations of the expected increase of the workload during a cycle are similar to the analysis performed for the proof of Theorem \ref{theorem-marginal}. This guarantees that the stability condition is sufficient. In order to show that it is also necessary, one may use the expression for the expectation of the steady-state workload of each queue, cf. Equation \eqref{Eq:Mean}.

\begin{remark} Equivalently, one can prove the stability condition by adapting
the steps presented in \cite{Cao}. More concretely, the two-queue polling system under consideration is said to be stable if the workloads at each queue, at the polling instants (i.e., the instant when the server arrives at a queue), have a proper limiting distribution, and the mean cycle time is finite, as time tends to infinity. By definition, the latter always holds, as the cycle time for the polling model under consideration is given by the sum of the two exponentially distributed residing times at the two queues. Following  a similar argumentation as in \cite{Cao}, one would need to ensure that, under the above stability condition, each queue is stable. Note that each queue in isolation behaves like an M/G/1 queue with a service speed governed by a two-state Markov chain. The stability condition (in the sense that the workload has a proper limiting distribution) of such a queueing system is studied in \cite{Baccelli}.

\end{remark}

\begin{remark} Intuitively, the stability condition for the first queue can be interpreted as follows: the long-run proportion of time the server spends in the first queue is equal to $c_2/(c_1 + c_2)$, thus the long-run rate of service in the first queue is $c_2/(c_1 + c_2)\mathbb{E}(B_1)$. Hence, for the first queue to be stable it is needed that the arrival rate  is strictly smaller than the long-run rate of service, which corresponds to the left hand side of \eqref{Stability}. The stability condition for the second queue can be interpreted in an analogous manner.
\end{remark}

\section{Marginal workload analysis}
\label{sec:marginal}
In this section, we derive the distribution of the marginal workload in steady-state at an arbitrary epoch.
As discussed in the introductory section, the individual queues behave as vacation systems: from the perspective of one queue, the server is on vacation when it resides at the other queue.
In this section, we give a direct derivation of the stationary marginal workload distributions.
\\
We let $V_i(t)$ denote the workload at time $t$, $t\geq0$, of queue $i$, $i=1,2$, and $V_i$ denote the steady-state workload of queue $i$ at an arbitrary epoch, $i=1,2$.

\begin{theorem}
\label{theorem-marginal}
	The LST of the workload of the first queue in steady-state under the stability condition \eqref{Stability} is given by
	\begin{equation}\label{MLQ1}
	\mathbb{E}(e^{-sV_1})=\frac{s\left[\lambda_1 \mathbb{E}(B_1) \left(c_1+c_2\right)-c_2\right]\left[c_1+c_2+\lambda_1\left(1-\tilde{b}_1(s)\right)\right]}{\left[\left(c_2+\lambda_1\left(1-\tilde{b}_1(s)\right)\right)\left(c_1+\lambda_1\left(1-\tilde{b}_1(s)\right)-s\right)-c_1c_2\right]\left(c_1+c_2\right)}.
	\end{equation}
	A symmetric formula holds for the LST of $V_2$ under the stability condition \eqref{Stability}.
\end{theorem}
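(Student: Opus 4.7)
The plan is to study the bivariate Markov process $(V_1(t),S(t))$ in steady state, where $S(t)\in\{1,2\}$ marks which queue the server currently resides at. This is a piecewise-deterministic Markov process: between jumps, $V_1$ decreases at unit rate when $S=1$ and $V_1>0$ and is constant when $S=2$; compound-Poisson increments of size $B_1$ arrive at rate $\lambda_1$ in both regimes, and $S$ flips $1\to 2$ at rate $c_1$ and $2\to 1$ at rate $c_2$, independently of everything else.

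To handle the two regimes simultaneously I introduce the partial LSTs
\[
\Phi_j(s)=\E\bigl[e^{-sV_1}\mathbf{1}_{\{S=j\}}\bigr],\qquad j=1,2,
\]
and apply the stationarity identity $\E[\mathcal Lf]=0$ to test functions of the form $f(v,i)=e^{-sv}\mathbf{1}_{\{i=j\}}$. The choice $j=2$ has no drift contribution and gives at once
\[
\Phi_2(s)=\frac{c_1}{c_2+\lambda_1(1-\tilde b_1(s))}\,\Phi_1(s).
\]
For $j=1$, the drift term $-\partial_v e^{-sv}=se^{-sv}$ acts only on the part of the distribution with $V_1>0$, so it contributes $s(\Phi_1(s)-p_0)$, where $p_0:=\Pb(V_1=0,S=1)$ denotes the atom at the origin in state $1$, while the arrival and switching terms see the whole of $\Phi_1(s)$. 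Eliminating $\Phi_2$ then yields $\Phi_1(s)$, and hence $\E[e^{-sV_1}]=\Phi_1(s)+\Phi_2(s)$, as an explicit rational expression in $s$, $\tilde b_1(s)$, $c_1$, $c_2$, $\lambda_1$ and the single unknown $p_0$.

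To determine $p_0$ I impose the normalization $\E[e^{-sV_1}]\big|_{s=0}=1$. Both numerator and denominator vanish at $s=0$, so I apply l'H\^opital's rule using $\tfrac{d}{ds}\lambda_1(1-\tilde b_1(s))\big|_{s=0}=\rho_1$; this gives $p_0=\tfrac{c_2}{c_1+c_2}-\rho_1$, which is the long-run fraction of time the server sits at queue $1$ while the first queue is empty, and is strictly positive precisely under \eqref{Stability}. Substituting back and writing $c_1+\lambda_1(1-\tilde b_1(s))-s=-(s-\lambda_1(1-\tilde b_1(s))-c_1)$ to bring the denominator into the form stated in \eqref{MLQ1} completes the derivation; the symmetric formula for $V_2$ follows by interchanging the indices $1\leftrightarrow 2$.

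The only genuinely delicate point is the careful bookkeeping of the atom at zero in state $1$: one must split $\Phi_1(s)=p_0+\int_0^\infty e^{-sv}p_1(v)\,dv$ and apply the drift only to the integral before multiplying by $s$, because the mass at zero contributes no drift. Justifying that $f$ lies in the domain of the generator is standard for this PDMP, and the remaining algebraic manipulations are routine.
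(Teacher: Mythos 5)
Your proposal is correct, and it reaches \eqref{MLQ1} by a genuinely different route than the paper. The paper exploits the renewal structure at the instants the server arrives at queue 1: it builds up $\mathbb{E}(e^{-sV_1(T_1+T_2)})$ in steps from Cohen's \emph{transient} M/G/1 results (the time-dependent workload LST and the emptiness probability, which bring in the busy-period transform $\mu(c_1,1)$), uses stationarity of the workload at cycle endpoints to pin down the unknown constant, and then passes to an arbitrary epoch via the memoryless property of the visit times. You instead treat the stationary Markov process $(V_1,S)$ directly: the two balance (generator) identities for the partial transforms $\Phi_1,\Phi_2$ give $\Phi_2(s)=\frac{c_1}{c_2+\lambda_1(1-\tilde b_1(s))}\Phi_1(s)$ and $\Phi_1(s)\bigl[s-\lambda_1(1-\tilde b_1(s))-c_1\bigr]+c_2\Phi_2(s)=s\,p_0$, and normalization at $s=0$ yields $p_0=\mathbb{P}(V_1=0,S=1)=\frac{c_2}{c_1+c_2}-\rho_1$; I checked that eliminating $\Phi_2$ and summing indeed reproduces \eqref{MLQ1} exactly, and your bookkeeping of the atom is sound (the possible atom at $\{V_1=0,S=2\}$ is harmless since state 2 has no drift term). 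What each approach buys: yours is more elementary and self-contained (no transient M/G/1 input, the busy-period LST never appears), it delivers the conditional transforms $\mathbb{E}(e^{-sV_1}\mid S=j)$ as a byproduct, and it extends naturally to Markov-modulated or L\'evy input; the paper's route, on the other hand, produces the visit-end transform \eqref{finaltotal} explicitly, which is reused later (the remark on $\mathbb{E}(e^{-sY})$ and the joint-workload functional equation of Section \ref{sec:JointWork}), and makes the vacation/renewal interpretation transparent. The only point you should spell out in a polished write-up is the stationarity identity $\mathbb{E}[\mathcal{L}f]=0$ for your test functions; since $f$ and $\mathcal{L}f$ are bounded for $\mathrm{Re}\,s\geq 0$, this follows from Dynkin's formula for the PDMP, as you indicate.
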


\begin{proof} The derivation of the LST of the steady-state workload for the first queue is performed by considering the renewal process at the instances the server arrives at the first queue, i.e., the inter-renewal times are identical in distribution to $T_1+T_2$, with $T_i\sim \mathrm{Exp}(c_i)$, $i=1,2$. \\
	To structure the exposition, the proof of the theorem is split into five steps. A key point of the proof is the derivation of $\mathbb{E}(e^{-sV_1(T_1 + T_2)})$; this is achieved in Step 4, after we derive $\mathbb{E}(e^{-sV_1(T_1+T_2)}|V_1(T_1)=y)$ in Step 1, $\mathbb{E}(e^{-sV_1(T_1)}|V_1(0)=v)$ in Step 2, and subsequently $\mathbb{E}(e^{-sV_1(T_1+T_2)}|V_1(0)=v)$ in Step 3. Finally, in Step 5, we calculate $\mathbb{E}(e^{-sV_1})$ using the PASTA property and the result of Step 4.
	\begin{description}	
		\item{\bf Step 1:} Calculation of $\mathbb{E}(e^{-sV_1(T_1+T_2)}|V_1(T_1)=y)$.\\
		During $(T_1,T_1+T_2]$ the server serves only customers in the second queue, so the workload in the first queue only increases by the sum of the service times of all the customers that arrived within this interval.
		The increments occur according to a compound Poisson process. So,	
		\begin{equation}\label{Eq:firstterm}
		\begin{split}		
		\mathbb{E}(e^{-sV_1(T_1+T_2)}|V_1(T_1)=y) &= e^{-sy}\tilde{f}_{T_2}(\lambda_1(1-\tilde{b}_1(s)))
		\\&=e^{-sy}\frac{c_2}{c_2+\lambda_1(1-\tilde{b}_1(s))}.
		\end{split}
		\end{equation}

		\item{\bf Step 2:} Calculation of $\mathbb{E}(e^{-sV_1(T_1)}|V_1(0)=v)$.\\
		Note that
		\begin{align}\label{EVT1}
		\mathbb{E}({e}^{-sV_1(T_1)}|V_1(0)=v) =& \int_{t=0}^{\infty} c_1 {e}^{-c_1t}
		\int_{\sigma=0}^{\infty}e^{-s\sigma}\mathrm{d}\mathbb{P}(V_1(t)<\sigma|V_1(0)=v) {\rm d}t.
		\end{align}
		In order to calculate the right hand side of \eqref{EVT1}, we use \cite[p.~262,~Equation~(4.99)]{cohen2012single}
		\begin{align*}
		\int_{\sigma=0}^{\infty}&e^{-s\sigma}\mathrm{d}\mathbb{P}(V_1(t)<\sigma|V_1(0)=v) =  e^{s(t-v)-t\lambda_1(1-\tilde{b}_1(s))} \nonumber\\
		& -s U_1(t-v) \int_{u=0}^{t-v}e^{(s-\lambda_1(1-\tilde{b}_1(s)))(t-u-v)}\mathbb{P}(V_1(u+v)=0|V_1(0)=v)\mathrm{d}u,
		\end{align*}	
		with $\mathrm{Re}~s \geq 0$, $t \geq 0$, and $U_1(x) = 0$, if $x < 0$, and $U_1(x) = 1$, otherwise.
		Hence, Equation \eqref{EVT1} in light of \cite[p.~262,~Equation~(4.99)]{cohen2012single} yields
		\begin{equation}
		\begin{split}		
		&\mathbb{E}({e}^{-sV_1(T_1)}|V_1(0)=v) \\& = \frac{c_1 {e}^{-sv}}{c_1 + \lambda_1(1 - \tilde{b}_1(s)) -s}		
		\\&-
		\int_{t=v}^{\infty}s
		c_1 e^{-c_1t}
		\int_{u=0}^{t - v}e^{(s - \lambda_1(1 - \tilde{b}_1(s)))(t - u - v)}\mathbb{P}(V_1(u + v) = 0|V_1(0) = v)
		\mathrm{d}u \, \mathrm{d}t .
		\label{eq3.4}
		\end{split}
		\end{equation}
		For the calculation of the integrals in the right hand side of Equation \eqref{eq3.4} we use \cite[p.~260, ~Equation~(4.92)]{cohen2012single}, for $\mathrm{Re}~s \geq 0$, $t \geq 0$,
		\begin{equation*}
		\int_{t=0}^{\infty}e^{-s t}\mathbb{P}(V_1(t)=0|V_1(0)=v)\mathrm{d}t=\frac{e^{-(s+(1-\mu(s,1))\lambda_1)v}}{s+(1-\mu(s,1))\lambda_1},
		\end{equation*}
		with $\mu(s,1)$ denoting the LST of the busy period distribution of the M/G/1 queue with arrival rate $\lambda_1$ and service time LST $\tilde{b}_1(s)$;
		$\mu(s,1)$ is the root of $z=\tilde{b}_1\left(s + (1-z)\lambda_1\right)$ with the smallest absolute value, cf. \cite[ p.~250]{cohen2012single}).
		A lengthy but straightforward calculation, that involves interchanging the integrations, yields, for $\mathrm{Re}~s \geq 0$,
		\begin{align}
		\int_{t=v}^{\infty}s c_1 e^{-c_1t} \int_{u=0}^{t - v}&e^{(s - \lambda_1(1 - \tilde{b}_1(s)))(t - u - v)}
		\mathbb{P}(V_1(u + v) = 0|V_1(0) = v)\mathrm{d}u \mathrm{d}t  \nonumber\\
		&= \frac{sc_1}{c_1 - s + \lambda_1(1- \tilde{b}_1(s))}\frac{e^{-(c_1 + (1- \mu(c_1, 1)) \lambda_1)v}}{c_1+(1- \mu(c_1, 1)) \lambda_1}.	
		\label{eq3.6}
		\end{align}
		Combining (\ref{eq3.4}) and (\ref{eq3.6}) yields
		\begin{equation}
		\begin{split}	
		\label{Step2}
		&\mathbb{E}(e^{-sV_1(T_1)}|V_1(0)=v) \\ &=
		\frac{c_1e^{-sv}}{c_1+\lambda_1(1-\tilde{b}_1(s))-s} -\frac{sc_1}{c_1-s+\lambda_1(1-\tilde{b}_1(s))}\frac{e^{-(c_1+(1-\mu(c_1,1))\lambda_1)v}}{c_1+(1-\mu(c_1,1))\lambda_1}.
		\end{split}
		\end{equation}
		
		\item{\bf Step 3:} Calculation of $\mathbb{E}(e^{-sV_1(T_1+T_2)}|V_1(0)=v)$.
		\begin{equation}
		\begin{split}		
		&\mathbb{E}(e^{-sV_1(T_1+T_2)}|V_1(0)=v)\\&= \int_{y=0}^{\infty}\mathbb{E}(e^{-sV_1(T_1+T_2)}|V_1(T_1)=y)f_{V_1}(V_1(T_1)=y|V_1(0)=v)\mathrm{d}y  \\
		&=\frac{c_2}{c_2+\lambda_1(1-\tilde{b}_1(s))}\int_{y=0}^{\infty} e^{-sy}f_{V_1}(V_1(T_1)=y|V_1(0)=v)\mathrm{d}y \\
		&=\frac{c_2}{c_2+\lambda_1(1-\tilde{b}_1(s))}\left[\frac{e^{-sv} c_1}{c_1+\lambda_1(1-\tilde{b}_1(s))-s}\right.\\
		&\left.
		-\frac{c_1 e^{-sv}}{c_1-s+\lambda_1(1-\tilde{b}_1(s))}\frac{e^{-(c_1+(1-\mu(c_1,1))\lambda_1)v}}{c_1+(1-\mu(c_1,1))\lambda_1}\right],
		\label{finalcondition}
		\end{split}
		\end{equation}
		where the second equation comes from Equation \eqref{Eq:firstterm} and the third from Equation \eqref{Step2}.

		\item{\bf Step 4:} Calculation of $\mathbb{E}(e^{-sV_1(T_1+T_2)})$ in steady-state.\\
		Observe that
		\begin{equation}
		\begin{split}	
		&\mathbb{E}(e^{-sV_1(T_1+T_2)}) \\&\int_{v=0}^{\infty}\mathbb{E}(e^{-sV_1(T_1+T_2)}|V_1(0)=v)f_{V_1(0)}(v) \mathrm{d}v\\
		&=\int_{v=0}^{\infty}	 \left[\frac{c_2}{c_2+\lambda_1(1-\tilde{b}_1(s))}\left[e^{-sv}\frac{c_1}{c_1+\lambda_1(1-\tilde{b}_1(s))-s}\right.\right.\\
		&-\left.\left.s\frac{c_1}{c_1-s+\lambda_1(1-\tilde{b}_1(s))}\frac{e^{-(c_1+(1-\mu(c_1,1))\lambda_1)v}}{c_1+(1-\mu(c_1,1))\lambda_1}\right]\right]f_{V_1(0)}(v) \mathrm{d}v,
		\label{18}
		\end{split}
		\end{equation}
		with $f_{V_1(0)}(v)$ the probability density function of $V_1(0)$.
		Now observe that in steady-state $V_1(T_1+T_2)$ has the same distribution as $V_1(0)$. So we can rewrite \eqref{18} as follows:
		\begin{align*}
		&\mathbb{E}(e^{-sV_1(T_1+T_2)})
		\\&=\int_{v=0}^{\infty}		 \left[\frac{c_2}{c_2+\lambda_1(1-\tilde{b}_1(s))}\left[e^{-sv}\frac{c_1}{c_1+\lambda_1(1-\tilde{b}_1(s))-s} \right.\right.\nonumber\\
		&\left.\left.-s\frac{c_1}{c_1-s+\lambda_1(1-\tilde{b}_1(s))}\frac{e^{-(c_1+(1-\mu(c_1,1))\lambda_1)v}}{c_1+(1-\mu(c_1,1))\lambda_1}\right]\right]f_{V_1(T_1+T_2)}(v) \mathrm{d}v.
		\end{align*}
		So,
		\begin{equation}
		\begin{split}	
		\mathbb{E}(		 &e^{-sV_1(T_1+T_2)})\left[\frac{c_2+\lambda_1(1-\tilde{b}_1(s))}{c_2}-\frac{c_1}{c_1+\lambda_1(1-\tilde{b}_1(s))-s}\right]\\
		=&-\frac{sc_1}{\left[c_1+\lambda_1(1-\tilde{b}_1(s))-s\right]\left(c_1+(1-\mu(c_1,1))\lambda_1\right)}
		\\&\mathbb{E}(e^{-\left(c_1+\left(1-\mu(c_1,1)\right)\lambda_1\right)V_1(T_1+T_2)})		 .
		\label{19}
		\end{split}
		\end{equation}
		Taking the limit as $s\rightarrow 0$ in \eqref{19} and using L'H\^{o}pital's rule yields
		\begin{align*}
		&\mathbb{E}(e^{-\left(c_1+\left(1-\mu(c_1,1)\right)\lambda_1\right)V_1(T_1+T_2)})
		\\&=-\frac{\left[\lambda_1 \mathbb{E}(B_1) c_1+\lambda_1 \mathbb{E}(B_1) c_2-c_2\right]\left(c_1+(1-\mu(c_1,1))\lambda_1\right)}{c_1c_2}.
		\end{align*}
		Hence,
		\begin{equation}
		\mathbb{E}(e^{-sV_1(T_1+T_2)})=\frac{s\left[\lambda_1 \mathbb{E}(B_1)(c_1+c_2)-c_2\right]}{\left[c_2+\lambda_1(1-\tilde{b}_1(s))\right]\left[c_1 + \lambda_1(1-\tilde{b}_1(s))-s\right]-c_1c_2}.
		\label{finaltotal}\end{equation}

		\item{\bf Step 5:} Calculation of $\mathbb{E}(e^{-sV_1})$ in steady-state.\\
		Firstly, let us denote by $S=1$ (respectively by $S=2$) the event of the server residing in the first (respectively second) queue. Then,
		\begin{align}		
		\mathbb{E}({e}^{-sV_1}) &= \mathbb{E}({e}^{-sV_1}| S=1) \mathbb{P}(S=1)+ \mathbb{E}({e}^{-sV_1}|S=2)\mathbb{P}(S=2) \nonumber\\
		&=\mathbb{E}({e}^{-sV_1}|S=1)\frac{c_2}{c_1+c_2}+ \mathbb{E}({e}^{-sV_1}|S=2)\frac{c_1}{c_1+c_2}.	 \label{Eq3.11}
		\end{align}
		Because of the memoryless property of the exponential distribution it is obvious that
		\[
		\mathbb{E}({e}^{-sV_1}| S=1) =\mathbb{E}(e^{-sV_1(T_1)}), \ \mathbb{E}({e}^{-sV_1}| S=2) = \mathbb{E}(e^{-sV_1(T_1+T_2)}).
		\]
		The latter term is given by \eqref{finaltotal}, while the former term is calculated using the same argument as in the derivation of Equation \eqref{Eq:firstterm}:
		\begin{equation}
		\mathbb{E}(e^{-sV_1(T_1+T_2)})= \mathbb{E}(e^{-sV_1(T_1)})\frac{c_2}{c_2 + \lambda_1 (1 - \tilde{b}_1(s))} .
		\label{eq3.15}
		\end{equation}
		Substituting  \eqref{eq3.15}, for $\mathbb{E}(e^{-sV_1(T_1)})$, and \eqref{finaltotal} in Equation \eqref{Eq3.11} yields \eqref{MLQ1}, which concludes the proof.
	\end{description}
	Similarly, we can also calculate the LST of the workload of the second queue.
\end{proof}
\begin{remark}
	It is not difficult to extend the above results to the case that the $T_2$ periods are non-exponential, see, e.g., \cite{gaver62,thiruvengadam63,avi-naor63},
	and to the case that the arrival process during those periods is a different compound Poisson process than during the $T_1$ periods, see, e.g., \cite{takinesengupta97} and \cite{fiems-maer-brun08}.
	One could even allow a more general non-decreasing L\'evy process (subordinator) during those $T_2$ periods.
	During $T_1$ periods, one could also allow the input process to be a subordinator.
	However, we do note that it is considerably more complicated to consider non-exponential $T_1$ periods, see, \cite{feder-green86}.
\end{remark}


\section{Workload decomposition and Heavy Traffic analysis}\label{sec:WorkloadDec}

In this section, we show that the steady-state workload $V_1$ (similarly for $V_2$) can be decomposed into two independent terms, one corresponding to the steady-state workload of the first queue in isolation, i.e., the M/G/1 queue with arrival rate $\lambda_1$ and service times $B_1$ (to be called: corresponding M/G/1 queue), and the second corresponding to the amount of work when the server is not serving the first queue, due to either an idle period or due to a visit at the second queue.\\
Assuming that the first three moments of $B_1$ are finite and then using the decomposition of $V_1$, we determine the mean, the variance, and the heavy traffic limit of the workload $V_1$. Furthermore, in this and in the next section, we use the decomposition to obtain various asymptotic (heavy traffic and/or heavy tail) results.
\begin{corollary}\label{Theo:DecompositionWorkload}
	The steady-state amount of work of the first queue, $V_1$, is distributed as a sum of two independent random variables $V_{\text{\tiny M/G/1}}$ and $Y$, i.e.,
	\begin{equation}\label{Eq:DecompositionWorkload}
	V_1 \overset{\text{d}}{=} V_{\text{\tiny M/G/1}} + Y,
	\end{equation}
	where $V_{\text{\tiny M/G/1}}$ is the steady-state amount of work in the corresponding M/G/1 queue, and $Y$ is the steady-state amount of work when the server is not serving at the first queue. The LST of the random variable $Y$ is given as
	\begin{equation}\label{Eq:D9}
	\begin{split}
	&\mathbb{E}(e^{-sY}) \\&=  \frac{c_2 - \rho_1 (c_1 + c_2)}{(1 - \rho_1)(c_1 + c_2)}\left[1 - \frac{s c_1}{(c_2 + c_1-s) \lambda_1(1 - \tilde{b}_1(s)) + \left(\lambda_1(1 - \tilde{b}_1(s))\right)^2  - s c_2}\right].
	\end{split}
	\end{equation}
\end{corollary}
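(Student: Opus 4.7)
The plan is to prove the identity (3.9) purely algebraically, by verifying at the level of Laplace--Stieltjes transforms that the expression for $\mathbb{E}(e^{-sV_1})$ obtained in Theorem \ref{theorem-marginal} factors as a product of the Pollaczek--Khinchine transform and the candidate transform $\mathbb{E}(e^{-sY})$. Since a product of LSTs characterises the distribution of a sum of independent random variables, this both establishes the decomposition and identifies the law of $Y$; the probabilistic interpretation of $Y$ as the contribution to $V_1$ during periods when the server is absent from queue 1 will then follow by elimination, once we know the M/G/1 component accounts for the ``own-service'' dynamics.

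First, I would recall that the corresponding M/G/1 workload has LST
\[
\mathbb{E}(e^{-sV_{\text{\tiny M/G/1}}}) = \frac{(1-\rho_1)\,s}{s - \lambda_1(1-\tilde b_1(s))}.
\]
To keep the algebra compact, I would abbreviate $A := \lambda_1(1-\tilde b_1(s))$ and rewrite the denominator in \eqref{MLQ1} as
\[
\bigl[(c_2+A)(c_1+A-s) - c_1 c_2\bigr](c_1+c_2) = \bigl[A(c_1+c_2+A) - s(c_2+A)\bigr](c_1+c_2),
\]
which is the key simplification: the Theorem \ref{theorem-marginal} expression becomes
\[
\mathbb{E}(e^{-sV_1}) = \frac{s\bigl[\rho_1(c_1+c_2)-c_2\bigr](c_1+c_2+A)}{\bigl[A(c_1+c_2+A) - s(c_2+A)\bigr](c_1+c_2)}.
\]
On the other side, I would put the bracketed factor in \eqref{Eq:D9} over a common denominator, noting that $(c_1+c_2-s)A + A^2 - sc_2 = A(c_1+c_2+A) - s(c_2+A)$, so that
\[
1 - \frac{sc_1}{A(c_1+c_2+A) - s(c_2+A)} = \frac{(c_1+c_2+A)(A-s)}{A(c_1+c_2+A) - s(c_2+A)}.
\]

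With both sides in this form, the verification of
\[
\mathbb{E}(e^{-sV_{\text{\tiny M/G/1}}})\cdot \mathbb{E}(e^{-sY}) = \mathbb{E}(e^{-sV_1})
\]
reduces to a one-line cancellation: the factor $(A-s)$ from the bracket cancels the $(s-A)$ in the PK denominator (up to a sign absorbed by $\rho_1(c_1+c_2)-c_2 = -[c_2-\rho_1(c_1+c_2)]$), and the resulting common factors $(c_1+c_2+A)$, $(1-\rho_1)$ and $(c_1+c_2)$ line up exactly. Finally I would check that $\mathbb{E}(e^{-sY})\big|_{s=0}=1$, which is immediate by applying L'H\^opital once to the bracket, and that the coefficient $\tfrac{c_2-\rho_1(c_1+c_2)}{(1-\rho_1)(c_1+c_2)}$ is positive under the stability condition \eqref{Stability}, confirming that \eqref{Eq:D9} defines a bona fide LST of a non-negative random variable.

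The main obstacle is purely bookkeeping: recognising the right way to group the denominator of \eqref{MLQ1} so as to expose the $(s-A)$ factor, and similarly rearranging the bracket in \eqref{Eq:D9} to expose the matching $(A-s)$. Once the substitution $A=\lambda_1(1-\tilde b_1(s))$ is made, everything else is routine rational-function algebra. The probabilistic interpretation of $Y$ as the stationary workload contributed while the server is away from queue 1 is not produced by the algebra itself; I would justify it by appealing to the construction in the proof of Theorem \ref{theorem-marginal}, where the workload at a polling instant is built up from an independent compound Poisson increment over a vacation period (the $T_2$-interval) on top of the workload left at the end of the previous visit, mirroring the classical vacation-decomposition heuristic even though, as noted in the introduction, the Fuhrmann--Cooper conditions do not hold here.
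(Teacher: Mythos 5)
Your algebra is correct and is, in essence, the paper's own computation run in the opposite direction: with $A:=\lambda_1(1-\tilde b_1(s))$ the denominator bracket of \eqref{MLQ1} is indeed $A(c_1+c_2+A)-s(c_2+A)$, the bracket in \eqref{Eq:D9} equals $(c_1+c_2+A)(A-s)$ over that same quantity, and the product with the Pollaczek--Khinchine transform reproduces \eqref{MLQ1}. The paper obtains \eqref{Eq:D9} by exactly this division of \eqref{MLQ1} by the M/G/1 workload LST, so the computational core of your proposal matches the paper.

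The genuine gap is in what the factorization alone can deliver. The corollary asserts (i) that $V_1$ is a sum of \emph{independent} random variables and (ii) that the second summand $Y$ is distributed as the workload at an epoch when the server is not serving queue 1. Your verification only shows that $\mathbb{E}(e^{-sV_1})$ equals the PK transform times the function in \eqref{Eq:D9}; checking that this function equals $1$ at $s=0$ and has a positive prefactor under \eqref{Stability} does \emph{not} show it is completely monotone, i.e.\ a bona fide LST (a ratio of two LSTs need not be one), and even granting that, it does not identify $Y$ with the conditional workload --- ``by elimination'' is not an argument, and your appeal to the Step 1--4 construction of Theorem \ref{theorem-marginal} is only a heuristic. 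The paper closes both points at once by invoking the decomposition theorem of Boxma (\cite[Theorem~2.1]{boxma1989workloads}), whose hypotheses are verified for this model and which yields the independence structure together with the stated meaning of $Y$; the LST of $Y$ then follows legitimately by division. Alternatively, you could close the gap directly, as in the remark following the corollary in the paper, by writing $\mathbb{E}(e^{-sY})$ as the appropriate mixture of $1$ (idle part of a visit period, zero workload) and $\mathbb{E}(e^{-sV_1(T_1+T_2)})$ from \eqref{finaltotal} (which, by PASTA, is the workload LST at an arbitrary epoch of a visit to the other queue) and checking that this mixture equals \eqref{Eq:D9}; without one of these ingredients your proof establishes the transform identity but not the corollary as stated.
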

\begin{proof}
	The workload decomposition result follows from \cite[Theorem~2.1]{boxma1989workloads}; it is readily verified that all conditions of that theorem are satisfied. And the LST of $Y$ can be directly obtained by dividing the LST of $V_1$ (which is given by Eqn. \eqref{MLQ1}) by the known LST of the M/G/1 queue (cf. \cite[p.~257, Equation~(4.90)]{cohen2012single}).
\end{proof}

\begin{remark}
$\mathbb{E}(e^{-sY})$ could also have been obtained by writing it as a weighted sum of two known terms:
(i) 1, which is the LST of the (zero) workload in the first queue during an idle part of the visit period, and
(ii) $\mathbb{E}(e^{-sV_1(T_1+T_2)})$, which is given in (\ref{finaltotal}). PASTA implies that the latter term is also the LST of the workload in the first queue
at an arbitrary epoch of a visit period of the other queue.
\end{remark}

We now use the decomposition result \eqref{Eq:DecompositionWorkload} to determine the mean and the variance of $V_1$.
\begin{theorem}\label{Theo:Mean_Var}
	The expectation of the steady-state workload of the first queue, $\mathbb{E}(V_1)$, is
	\begin{equation}\label{Eq:Mean}
	\mathbb{E}(V_1) = \frac{\rho_1(c_1 + c_2)}{c_2 - \rho_1(c_1 + c_2)} \left[\frac{ 1}{2}  \frac{\mathbb{E}(B_1^2)}{\mathbb{E}(B_1)} + \frac{c_1 }{(c_1 + c_2)^2}\right],
	\end{equation}
	and the corresponding variance, $\mathbb{V}\mathrm{ar}(V_1)$, is
	\begin{equation}
	\begin{split}
	\mathbb{V}\mathrm{ar}(V_1) =& \frac{\rho_1(c_1 + c_2)}{c_2 - \rho_1(c_1 + c_2)} \\ &\left[\frac{1}{3} \frac{\mathbb{E}(B_1^3)}{\mathbb{E}(B_1)} + \frac{1}{4} \frac{\rho_1 }{1 - \rho_1} \frac{\left(\mathbb{E}(B_1^2)\right)^2}{\left(\mathbb{E}(B_1)\right)^2} + \frac{c_1}{(c_1 + c_2)^2} \frac{\mathbb{E}(B_1^2)}{\mathbb{E}(B_1)} + \frac{c_1}{(c_1 + c_2)^3}\right]. \label{Eq:Var}
	\end{split}
	\end{equation}	
\end{theorem}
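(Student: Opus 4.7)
The plan is to invoke the decomposition \eqref{Eq:DecompositionWorkload} of Corollary \ref{Theo:DecompositionWorkload} together with the independence of $V_{\text{\tiny M/G/1}}$ and $Y$, so that $\mathbb{E}(V_1) = \mathbb{E}(V_{\text{\tiny M/G/1}}) + \mathbb{E}(Y)$ and $\mathbb{V}\mathrm{ar}(V_1) = \mathbb{V}\mathrm{ar}(V_{\text{\tiny M/G/1}}) + \mathbb{V}\mathrm{ar}(Y)$. The moments of the M/G/1 summand are classical: Pollaczek--Khinchine yields $\mathbb{E}(V_{\text{\tiny M/G/1}}) = \rho_1 \mathbb{E}(B_1^2)/(2(1-\rho_1)\mathbb{E}(B_1))$, and a further differentiation of the same LST (or subtracting the squared mean from the standard formula for $\mathbb{E}(V_{\text{\tiny M/G/1}}^2)$) gives $\mathbb{V}\mathrm{ar}(V_{\text{\tiny M/G/1}}) = \rho_1 \mathbb{E}(B_1^3)/(3(1-\rho_1)\mathbb{E}(B_1)) + \rho_1^2 (\mathbb{E}(B_1^2))^2/(4(1-\rho_1)^2 (\mathbb{E}(B_1))^2)$.

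The bulk of the work is extracting $\mathbb{E}(Y)$ and $\mathbb{E}(Y^2)$ from the LST \eqref{Eq:D9}. I would set $\phi(s) := \lambda_1(1 - \tilde{b}_1(s))$ and expand it as $\phi(s) = \rho_1 s - \tfrac{1}{2}\lambda_1 \mathbb{E}(B_1^2)\, s^2 + \tfrac{1}{6}\lambda_1 \mathbb{E}(B_1^3)\, s^3 + O(s^4)$, then substitute into the denominator $D(s) := (c_1+c_2-s)\phi(s) + \phi(s)^2 - s c_2$ appearing in the bracket of \eqref{Eq:D9}. A short calculation shows $D(s) = -(c_2-\rho_1(c_1+c_2))\,s + O(s^2)$, so the fraction $sc_1/D(s)$ has a removable singularity at $s=0$; expanding it to second order turns the whole bracket into an explicit power series in $s$ whose first two Taylor coefficients, once multiplied by the prefactor $(c_2-\rho_1(c_1+c_2))/((1-\rho_1)(c_1+c_2))$, deliver $-\mathbb{E}(Y)$ and $\tfrac{1}{2}\mathbb{E}(Y^2)$, respectively. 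A quick sanity check that $\mathbb{E}(e^{-sY})|_{s=0}=1$ uses the identity $c_1 + (c_2-\rho_1(c_1+c_2)) = (1-\rho_1)(c_1+c_2)$.

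The closing step is to assemble $\mathbb{E}(V_{\text{\tiny M/G/1}}) + \mathbb{E}(Y)$ and $\mathbb{V}\mathrm{ar}(V_{\text{\tiny M/G/1}}) + \mathbb{E}(Y^2) - \mathbb{E}(Y)^2$, and to refactor both sums so that the common prefactor $\rho_1(c_1+c_2)/(c_2-\rho_1(c_1+c_2))$ of \eqref{Eq:Mean}--\eqref{Eq:Var} appears explicitly. The pivotal algebraic identity $\rho_1(c_1+c_2)/(c_2-\rho_1(c_1+c_2)) - \rho_1/(1-\rho_1) = \rho_1 c_1/((1-\rho_1)(c_2-\rho_1(c_1+c_2)))$ handles the merge of the two different denominators coming from $V_{\text{\tiny M/G/1}}$ and from $Y$. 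The main obstacle I anticipate is purely bookkeeping: the second-order expansion of $1 - sc_1/D(s)$ requires the $O(s^2)$ coefficient of $D(s)$, which mixes $\lambda_1 \mathbb{E}(B_1^2)$ and $\rho_1(1-\rho_1)$ contributions, and the subsequent simplification of $\mathbb{E}(Y^2) - \mathbb{E}(Y)^2$ is where arithmetic slips are easiest to make. A useful cross-check is the degenerate limit $c_1 \to 0$: here $Y \equiv 0$ almost surely and both \eqref{Eq:Mean} and \eqref{Eq:Var} must collapse to the pure Pollaczek--Khinchine formulas, which provides a convenient sanity test for the final refactoring.
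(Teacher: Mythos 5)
Your proposal is correct and follows essentially the same route as the paper: it invokes the decomposition of Corollary \ref{Theo:DecompositionWorkload} with the classical M/G/1 moments and extracts $\mathbb{E}(Y)$ and $\mathbb{E}(Y^2)$ by expanding the LST \eqref{Eq:D9} around $s=0$ (the paper's ``dividing numerator and denominator by $s$'' is exactly your removable-singularity observation). The only difference is that you spell out the bookkeeping the paper leaves implicit.
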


\begin{proof}
	The mean and variance can be obtained by using the decomposition result \eqref{Eq:DecompositionWorkload}. For this purpose, we can separately calculate the mean and the variance of the M/G/1 queue, cf. \cite[p.~256]{cohen2012single}, as well as the mean and the variance corresponding to the random variable $Y$. For the latter we use Equation \eqref{Eq:D9}
	(after dividing its numerator and denominator by $s$). Combining these results yields Equations \eqref{Eq:Mean} and \eqref{Eq:Var}.	
\end{proof}

\begin{remark}
	Equation \eqref{Eq:Mean} and Equation \eqref{Eq:Var} for $c_2 \to \infty$ (or equivalently for $c_1 \to 0$) yield the corresponding expressions for the mean and the variance of the M/G/1 queue, cf. \cite[p.~256]{cohen2012single}.
\end{remark}

Now, we study the behavior of the workload $V_1$ in heavy traffic, i.e., when $\rho_1 \uparrow \frac{c_2}{c_1 + c_2}$. In Corollary \ref{Theo:DecompositionWorkload} we have shown that $V_1$ can be written as the sum of the independent random variables $V_{\text{\tiny M/G/1}}$ and $Y$. Since most of the results related to the M/G/1 queue are already known, we take a closer look at $\mathbb{E}(e^{-sY})$, with the assumption that the first three moments of $B_1$ are finite. Substituting $\tilde{b}_1(s) = 1 - s \mathbb{E}(B_1) + \frac{s^2}{2} \mathbb{E}(B_1^2) - \frac{s^3}{3!} \mathbb{E}(B_1^3) + o(s^3)$ in \eqref{Eq:D9} and rearranging the terms yields, for $s \downarrow 0$,
\begin{align}\label{Eq:HTraffic5}
\mathbb{E}(e^{-sY})=  \frac{A_0}{1 - \rho_1} \left[1 - \frac{c_1}{c_1 + c_2} \left(\frac{1}{ A_0 + s A_1 - \frac{s^2}{2} A_2 + o(s^2)}\right) \right],
\end{align}
with
\begin{align}
A_0 =& \frac{c_2 }{c_1 + c_2} - \rho_1,\ A_1 = \frac{\rho_1}{c_1 + c_2}\left(1 - \rho_1 + \frac{c_1 + c_2}{2} \frac{\mathbb{E}(B_1^2)}{\mathbb{E}(B_1)}\right),\label{A0A1}\\
A_2 =& \frac{\rho_1^2}{c_1 + c_2}\left(\frac{1 - 2\rho_1}{\rho_1} \frac{\mathbb{E}(B_1^2)}{\mathbb{E}(B_1)} + \frac{c_1 + c_2}{3 \rho_1} \frac{\mathbb{E}(B_1^3)}{\mathbb{E}(B_1)}\right).
\label{A2}
\end{align}
Equation \eqref{Eq:HTraffic5} will play a very important role in the proof of Theorem \ref{Theo:Heavy_traffic} and also in the next section where we study the tail behavior of the workload $V_1$.
\begin{theorem}\label{Theo:Heavy_traffic}
	Assume that $\mathbb{E}(B_1^2) < \infty$.
	For $\rho_1 \uparrow \frac{c_2}{c_1 + c_2}$,
	\begin{equation}\label{Eq:HTraffic}
	\left( \frac{c_2}{c_1 + c_2} - \rho_1 \right) V_1 \overset{d}{\longrightarrow} Z, \end{equation} 	
	with $Z$ an exponentially distributed random variable with mean $\frac{c_1 c_2}{(c_1 + c_2)^3}  + \frac{c_2}{c_1 + c_2} \frac{1}{2}\frac{\mathbb{E}(B_1^2)}{\mathbb{E}(B_1)}$.
\end{theorem}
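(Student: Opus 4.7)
My plan is to invoke the workload decomposition of Corollary~\ref{Theo:DecompositionWorkload}, $V_1 \stackrel{d}{=} V_{\text{\tiny M/G/1}} + Y$ with independent summands, together with the expansion~\eqref{Eq:HTraffic5}. Set $\delta := \tfrac{c_2}{c_1+c_2} - \rho_1 = A_0$, so the heavy-traffic regime is $\delta \downarrow 0$ while $1-\rho_1 \to \tfrac{c_1}{c_1+c_2}>0$. Since $\rho_1$ stays bounded away from $1$, the random variable $V_{\text{\tiny M/G/1}}$ remains tight (it in fact converges in distribution to the steady-state M/G/1 workload at intensity $\tfrac{c_2}{c_1+c_2}$), so $\delta V_{\text{\tiny M/G/1}} \to 0$ in probability. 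By Slutsky's theorem and the independence of the two summands it then suffices to establish
\[
\delta Y \stackrel{d}{\longrightarrow} \mathrm{Exp}(A_1^{\ast}),
\]
where $A_1^{\ast}$ is the constant appearing in the target mean.

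\textbf{Limit of $\delta Y$.} I would substitute $s=\delta u$, with $u\geq 0$ fixed, into~\eqref{Eq:HTraffic5}. Because $A_0 = \delta$, and because the expansion of $1-\tilde{b}_1(s)$ up to $o(s^2)$ is legitimate under the sole hypothesis $\mathbb{E}(B_1^2)<\infty$, the bracketed denominator equals $\delta(1 + u A_1 + o(1))$ as $\delta\downarrow 0$. After the factor $\delta$ in this denominator cancels with the $\delta$ in the prefactor $A_0/(1-\rho_1)$, the two contributions inside the square bracket of~\eqref{Eq:HTraffic5} produce (i) a term $\delta/(1-\rho_1) \to 0$ coming from the leading ``$1$'', and (ii) a term $\tfrac{c_1/(c_1+c_2)}{(1-\rho_1)(1+uA_1+o(1))}$ tending to $\tfrac{1}{1+uA_1^{\ast}}$, using $1-\rho_1 \to \tfrac{c_1}{c_1+c_2}$ and writing $A_1^{\ast}$ for the limiting value of $A_1$. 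A short simplification of~\eqref{A0A1} at $\rho_1 = \tfrac{c_2}{c_1+c_2}$ gives
\[
A_1^{\ast} \;=\; \frac{c_1 c_2}{(c_1+c_2)^3} \;+\; \frac{c_2}{c_1+c_2}\cdot\frac{1}{2}\,\frac{\mathbb{E}(B_1^2)}{\mathbb{E}(B_1)},
\]
exactly the mean stated in the theorem. Thus $\mathbb{E}(e^{-\delta u Y}) \to (1+u A_1^{\ast})^{-1}$, the LST of $\mathrm{Exp}(A_1^{\ast})$, and the L\'evy continuity theorem closes the argument.

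\textbf{Main obstacle.} The distinctive feature is that the heavy-traffic scaling $\delta$ is not the classical $1-\rho_1$; the latter tends to the positive constant $\tfrac{c_1}{c_1+c_2}$, so the M/G/1 component of the decomposition contributes nothing to the limit and only the vacation-type term $Y$ supplies it. A related subtlety is that the two summands inside the square bracket of~\eqref{Eq:HTraffic5} are individually of order $1/\delta$, and their leading singular parts must cancel---a cancellation forced by the normalisation $\mathbb{E}(e^{0})=1$---for a non-degenerate limit to emerge. The substitution $s=\delta u$ makes this cancellation transparent; once it is made, only a routine Taylor expansion and an application of the L\'evy continuity theorem remain.
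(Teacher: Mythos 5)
Your proposal is correct and follows essentially the same route as the paper: the decomposition $V_1 \overset{d}{=} V_{\text{\tiny M/G/1}} + Y$, the scaling $s \mapsto sA_0$ in \eqref{Eq:HTraffic5}, the vanishing contribution of the M/G/1 factor, and the resulting limit LST $(1+uA_1)^{-1}$ with $A_1$ evaluated at $\rho_1 = \tfrac{c_2}{c_1+c_2}$. The only differences are cosmetic: you dispose of the M/G/1 term by tightness and Slutsky rather than by letting its LST tend to one, and you correctly note that the expansion need only be carried to $o(s^2)$, so that $\mathbb{E}(B_1^2)<\infty$ suffices, which is exactly the hypothesis of the theorem.
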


\begin{proof} To obtain the heavy traffic limit of $V_1$ one can use the workload decomposition. Corollary \ref{Theo:DecompositionWorkload} implies that
	\begin{equation}\label{Eq:HTraffic0}
	\mathbb{E}(e^{-sV_1}) = \mathbb{E}(e^{-sV_{\text{\tiny M/G/1}}}) \mathbb{E}(e^{-sY}).
	\end{equation}
	Replacing $s$ by $s A_0 = s(\frac{c_2}{c_1+c_2}-\rho_1)$, cf. \eqref{A0A1}, in the above equation and taking the limit $\rho_1 \uparrow \frac{c_2}{c_1 + c_2}$ yields
	\begin{equation}\label{Eq:HTraffic1}
	\begin{split}
	&\lim\limits_{\rho_1 \uparrow \frac{c_2}{c_1 + c_2}} \mathbb{E}\left(e^{-s \left(\frac{c_2}{c_1 + c_2} - \rho_1 \right)V_1}\right) \\ &= \lim\limits_{\rho_1 \uparrow \frac{c_2}{c_1 + c_2}} \mathbb{E}\left(e^{-s\left(\frac{c_2}{c_1 + c_2} - \rho_1 \right)V_{\text{\tiny M/G/1}}}\right) \mathbb{E}\left(e^{-s \left(\frac{c_2}{c_1 + c_2} - \rho_1 \right)Y}\right).
	\end{split}
	\end{equation}
	The first term in the right hand side obviously tends to one for $\rho_1 \uparrow \frac{c_2}{c_1+c_2}$,
	as the corresponding M/G/1 queue is in heavy traffic only when $\rho_1 \uparrow 1$.
	In order to calculate the limit for the second term in \eqref{Eq:HTraffic1}, we replace $s$ by $s A_0 = s(\frac{c_2}{c_1+c_2}-\rho_1)$, cf. \eqref{A0A1}, in \eqref{Eq:HTraffic5}, which yields
	\begin{equation}\label{Eq:HTraffic7}
	\mathbb{E}(e^{-s(\frac{c_2}{c_1+c_2}-\rho_1) Y}) =  \frac{1}{1 - \rho_1}\left[A_0 + \frac{c_1}{c_1 + c_2} \left( \frac{1}{1 + s A_1 - \frac{s^2}{2} A_0 A_2 + o\left(s^2A_0\right)}\right)\right].
	\end{equation}
	Taking the limit $\rho_1 \uparrow \frac{c_2}{c_1 + c_2}$ in \eqref{Eq:HTraffic7} yields
	\begin{equation}\label{Eq:HTraffic8}	
	\lim\limits_{\rho_1 \uparrow \frac{c_2}{c_1 + c_2}} \mathbb{E}(e^{-s \left(\frac{c_2 }{c_1 + c_2} - \rho_1\right) Y}) = \frac{1}{1 + s A_1},	
	\end{equation}
	with $A_1 $ given in \eqref{A0A1}. From \eqref{Eq:HTraffic8}, \eqref{A0A1}, and \eqref{Eq:HTraffic1} the statement of the theorem follows by noticing that the right hand side of \eqref{Eq:HTraffic8} corresponds to the LST of an exponentially distributed random variable with mean $A_1$.
\end{proof}

\begin{remark}
	Letting $c_2 \to \infty$, Theorem \ref{Theo:Heavy_traffic} indicates that the heavy traffic result reduces to that of an ordinary M/G/1 queue.
\end{remark}


\section{Heavy tail asymptotics}\label{sec:Asympt}
In this section, we discuss the tail behavior of the workload in the case of heavy tailed service time distributions. We also study the heavy traffic behavior of the workload $V_1$ when the service time distribution $B_1$ is regularly varying. To do this analysis, we now introduce the definition of a regularly varying random variable/distribution.
\begin{definition}
	The distribution function of a random variable $B_1$ on $[0, \infty)$ is called regularly varying of index $-\nu$, with $\nu \in \mathbb{R}$, if
	\begin{equation}\label{RVDef}	
	\mathbb{P}(B_1 > x) \sim L(x) x^{-\nu},~ {x \uparrow \infty},
	\end{equation}
	with $L(x)$ a slowly varying function at infinity, i.e., $\lim\limits_{x \to \infty} \frac{L(\alpha x)}{L(x)} = 1$, for all $\alpha > 1.$	
\end{definition}

\begin{theorem}\label{Theo:Heavy_tail}
	If the service time distribution of the random variable $B_1$ is regularly  varying of index $-\nu$, with $\nu \in (1, 2)$, then the workload of the first queue under the stability condition \eqref{Stability} is regularly varying at infinity of index $1 - \nu$. More precisely,
	\begin{equation}\label{Eq:Heavy_tail}
	\mathbb{P}\left(V_1 > x\right) \sim \frac{\rho_1}{\frac{c_2}{c_1+c_2} - \rho_1}  \frac{1}{\mathbb{E}(B_1)(\nu-1)} x^{1 - \nu} L\left(x\right), \quad \quad x \uparrow \infty.
	\end{equation}
\end{theorem}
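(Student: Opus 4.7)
The plan is to leverage the workload decomposition $V_1\overset{\text{d}}{=}V_{\text{\tiny M/G/1}}+Y$ from Corollary \ref{Theo:DecompositionWorkload}, combined with the fact that for independent non-negative random variables whose distributions are both regularly varying with the same index, the tail of the sum is asymptotic to the sum of the tails (see, e.g., Feller, or Bingham-Goldie-Teugels). Thus the theorem reduces to identifying the tails of $V_{\text{\tiny M/G/1}}$ and of $Y$ separately, and then summing them.

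For the first summand, Cohen's classical heavy-tail result for the stationary workload of an M/G/1 queue with regularly varying service times yields
\begin{equation*}
\mathbb{P}(V_{\text{\tiny M/G/1}}>x)\sim\frac{\rho_1}{1-\rho_1}\cdot\frac{L(x)\,x^{1-\nu}}{\mathbb{E}(B_1)(\nu-1)},\qquad x\uparrow\infty,
\end{equation*}
so only the tail of $Y$ remains to be determined. For this I would start from the explicit LST \eqref{Eq:D9} and analyse it as $s\downarrow 0$. The essential Tauberian input is the expansion
\begin{equation*}
1-\tilde{b}_1(s)=\mathbb{E}(B_1)\,s-\frac{\Gamma(2-\nu)}{\nu-1}\,L(1/s)\,s^{\nu}+o\bigl(s^{\nu}L(1/s)\bigr),\qquad s\downarrow 0,
\end{equation*}
obtained by applying Karamata's theorem to the residual service-time distribution. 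Substituting this expansion into \eqref{Eq:D9}, the $O(s)$ leading parts of the numerator and denominator of the factor $sc_1/D(s)$ cancel (as they must, since $\mathbb{E}(e^{-sY})\to 1$), and a careful expansion retaining the $O(s^{\nu}L(1/s))$ correction from $\tilde{b}_1$ throughout yields
\begin{equation*}
1-\mathbb{E}(e^{-sY})\sim\frac{c_1\,\lambda_1\,\Gamma(2-\nu)}{(\nu-1)(c_1+c_2)(1-\rho_1)\bigl(\tfrac{c_2}{c_1+c_2}-\rho_1\bigr)}\,L(1/s)\,s^{\nu-1}.
\end{equation*}
A second application of the Tauberian theorem of Bingham-Goldie-Teugels (or de Haan) then converts this into a regularly varying tail
\begin{equation*}
\mathbb{P}(Y>x)\sim\frac{c_1\,\rho_1}{(c_1+c_2)(1-\rho_1)\bigl(\tfrac{c_2}{c_1+c_2}-\rho_1\bigr)\mathbb{E}(B_1)(\nu-1)}\,L(x)\,x^{1-\nu},
\end{equation*}
using $\lambda_1=\rho_1/\mathbb{E}(B_1)$ and $\Gamma(2-\nu)/\Gamma(2-\nu)=1$.

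Adding the two tails, factoring out $\rho_1\,L(x)\,x^{1-\nu}/[(1-\rho_1)\mathbb{E}(B_1)(\nu-1)]$, one is left with the bracket $1+c_1/[(c_1+c_2)(c_2/(c_1+c_2)-\rho_1)]$, which simplifies algebraically to $(1-\rho_1)/(c_2/(c_1+c_2)-\rho_1)$; the factor $1-\rho_1$ cancels and \eqref{Eq:Heavy_tail} emerges. The main obstacle is precisely the bookkeeping in the LST expansion around $s=0$: because the denominator of $sc_1/D(s)$ has leading behaviour $O(s)$, it must be inverted to one further order so as to expose the subleading $O(s^{\nu}L(1/s))$ contribution inherited from $\tilde{b}_1$, and any premature truncation destroys the correct constant. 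Once this expansion is done properly, the Tauberian step and the additivity of regularly varying tails are routine.
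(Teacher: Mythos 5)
Your proposal is correct and follows essentially the same route as the paper's proof: the workload decomposition $V_1\overset{\text{d}}{=}V_{\text{\tiny M/G/1}}+Y$, Cohen's M/G/1 tail result, a Tauberian expansion of $\mathbb{E}(e^{-sY})$ via \eqref{Eq:D9} near $s=0$ (your $-\Gamma(2-\nu)/(\nu-1)$ is just $\Gamma(1-\nu)$), the reverse Tauberian step for $\mathbb{P}(Y>x)$, and the additivity of tails for independent regularly varying summands of equal index. The constants you obtain and the final simplification match the paper exactly.
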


\begin{proof}
	To prove that $V_1$ is regularly  varying at infinity, one can again use the decomposition property of the workload $V_1$. From Corollary \ref{Theo:DecompositionWorkload}, we get
	\begin{equation}\label{Eq:Heavy_tail1}
	\mathbb{P}(V_1 > x) = \mathbb{P}(V_{\text{\tiny M/G/1}} + Y > x).
	\end{equation}
	In the M/G/1 queue it follows from \cite{10.2307/3212351} that  $\mathbb{P}(V_{\text{\tiny M/G/1}} > x)$ is regularly varying of index $1 - \nu$ at infinity if and only if the tail of the service time distribution $\mathbb{P}(B_1 > x)$ is regularly varying of index $-\nu$ at infinity, and one has
	\begin{equation}\label{Eq:Heavy_tail2}
	\mathbb{P}\left(V_{\text{\tiny M/G/1}} > x\right) 	\sim	\frac{\rho_1}{\rho_1 - 1} \frac{1}{\mathbb{E}(B_1)(1 - \nu)}x^{1 - \nu}
	L\left(x\right), \quad \quad \quad x \uparrow \infty.
	\end{equation}
	Now we have to compute $\mathbb{P}(Y > x)$ for $x \uparrow \infty$.
	Our main tool is the Tauberian theorem of \cite[Theorem~8.1.6]{bingham1989regular},
	which relates the behavior of a regularly varying function at infinity and the behavior of its LST near $0$.
	Applying this theorem to \eqref{RVDef} gives
	\begin{equation*}
	\tilde{b}_1(s) - 1 + s \mathbb{E}(B_1) \sim - \Gamma\left(1 - \nu\right) s^{\nu} L\left(\frac{1}{s}\right), \quad \quad s \downarrow 0,
	\end{equation*}
	and hence
	\begin{equation}\label{Eq:Heavy_tail31}
	\frac{\lambda_1 \left(1 - \tilde{b}_1(s)\right)}{s} = \rho_1 \left(1 + \frac{\Gamma\left(1 - \nu\right)}{\mathbb{E}(B_1)} s^{\nu - 1} L\left(\frac{1}{s}\right)\right), \quad \quad s \downarrow 0.
	\end{equation}
	Substituting Equation \eqref{Eq:Heavy_tail31} in \eqref{Eq:D9} yields, for $s \downarrow 0$:
	\begin{align*}  	
	&\mathbb{E}(e^{-sY}) \\ &=  \frac{c_2 - \rho_1 (c_1 + c_2)}{(1 - \rho_1)(c_1 + c_2)}\left[1 - \frac{c_1}{(c_1 + c_2) \rho_1 \left(1 + \frac{\Gamma\left(1 - \nu\right)}{\mathbb{E}(B_1)} s^{\nu - 1} L\left(\frac{1}{s}\right)\right) - c_2 + O(s)}\right]\nonumber\\
	&=  \frac{c_2 - \rho_1 (c_1 + c_2)}{(1 - \rho_1)(c_1 + c_2)}\nonumber\\
	&\times \left[1 + \frac{c_1}{\left(c_2 - \rho_1 (c_1 + c_2)\right) \left( 1 -  \frac{\rho_1 (c_1 + c_2)}{c_2 - \rho_1 (c_1 + c_2)} \frac{\Gamma\left(1 - \nu\right)}{\mathbb{E}(B_1)}  s^{\nu - 1} L\left(\frac{1}{s}\right)\right) + O(s)}\right] \nonumber\\	
	&=  \frac{c_2 - \rho_1 (c_1 + c_2)}{(1 - \rho_1)(c_1 + c_2)}\nonumber\\
	&+ \frac{c_1}{(1 - \rho_1)(c_1 + c_2)}\left( 1 +  \frac{\rho_1 (c_1 + c_2)}{c_2 - \rho_1 (c_1 + c_2)} \frac{\Gamma\left(1 - \nu\right)}{\mathbb{E}(B_1)}  s^{\nu - 1} L\left(\frac{1}{s}\right) + O(s) \right).
	\end{align*}
	Simplifying, we get
	\begin{equation*}
	\mathbb{E}(e^{-sY}) -  1 =  \frac{\rho_1 c_1}{(1 - \rho_1)\left(c_2 - \rho_1 (c_1 + c_2) \right)} \frac{\Gamma\left(1 - \nu\right)}{\mathbb{E}(B_1)} s^{\nu - 1} L\left(\frac{1}{s}\right),  \quad \quad  s \downarrow 0.	
	\end{equation*}  	
	Applying the Tauberian theorem of \cite[Theorem~8.1.6]{bingham1989regular} once again, now in the reverse direction, yields
	\begin{align}
	\mathbb{P}\left(Y > x\right) \sim& - \frac{1}{\Gamma\left(2 - \nu\right)} \frac{\rho_1 c_1}{(1 - \rho_1)\left(c_2 - \rho_1 (c_1 + c_2) \right)} \frac{\Gamma\left(1 - \nu\right)}{\mathbb{E}(B_1)} x^{1 - \nu} L\left(x\right) \nonumber \\
	=& \frac{\rho_1 c_1 }{(1 - \rho_1)\left(c_2 - \rho_1 (c_1 + c_2)\right)} \frac{1}{\mathbb{E}(B_1)(\nu - 1)} x^{1 - \nu} L\left(x\right), \quad \quad  \quad x \uparrow \infty.\label{Eq:Heavy_tail7}
	\end{align}
	From Equation \eqref{Eq:Heavy_tail2} and  \eqref{Eq:Heavy_tail7}, we see that both $V_{\text{\tiny M/G/1}}$ and $Y$ are regularly varying random variables of index $1 - \nu$. Using the workload decomposition property \eqref{Eq:DecompositionWorkload} and a well known result regarding the tail behavior of the sum of two independent regularly  varying random variables of the same index, see \cite{sigman1999appendix}, yields
	\begin{eqnarray}\label{Eq:Heavy_tail8}
	\mathbb{P}\left(V_1 > x\right) \sim (C_1 + C_2) x^{1 - \nu} L\left(x\right), \quad\quad\quad \quad \quad \quad x \uparrow  \infty,
	\end{eqnarray}	  	
	with $C_1$ and $C_2$ the coefficients of the tail $x^{1 - \nu}$ for  $V_{\text{\tiny M/G/1}}$ and $Y$ in \eqref{Eq:Heavy_tail2} and \eqref{Eq:Heavy_tail7}, respectively. Substituting the coefficients from \eqref{Eq:Heavy_tail2} and  \eqref{Eq:Heavy_tail7} concludes the proof of the theorem.
\end{proof}

\begin{remark}
	Letting $c_2 \to \infty$ in Equation \eqref{Eq:Heavy_tail8} yields
	\begin{eqnarray}\label{Eq:Heavy_tail_remark}
	\mathbb{P}\left(V_1 > x\right) 	\sim \frac{\rho_1}{1 - \rho_1} \frac{1}{\mathbb{E}(B_1)(\nu - 1)}x^{1 - \nu} L\left(x\right),	 \quad \quad x \uparrow \infty,
	\end{eqnarray}
	which is, as expected, the result for an ordinary M/G/1 queue.
\end{remark}

\begin{remark}
	Theorem~\ref{Theo:Heavy_tail} is closely related to \cite[Theorem~4.1]{BoxmaKurkova} for a single server queue
	with alternating high and low service speeds. In \cite{BoxmaKurkova} both the service time distribution {\em and} the distribution of the periods of low service speed are regularly varying. If the latter tail is less heavy than the tail of the service time distribution, then our formula (\ref{Eq:Heavy_tail}) displays exactly the same tail behavior as \cite[Formula~(4.1)]{BoxmaKurkova}.
\end{remark}

\noindent
In the next theorem  we discuss how Theorem~\ref{Theo:Heavy_tail} can be generalized to the case of subexponential (residual) service times.

\begin{definition}
	A distribution function $\mathbb{P}(B_1 \leq x)$, $x \geq 0$, is called subexponential if
	\[ \mathbb{P}(B_{11} + \cdots + B_{1n} > x) \sim n \mathbb{P}(B_{11} > x),\ x \uparrow \infty ,\]
	for any $n \geq 2$, with $B_{11}, \dots, B_{1n}$ independent and identical copies of $B_1$.
\end{definition}

%

%
\begin{theorem}\label{Theo:SubExp}
If the distribution of the residual service time requirement, say $B_1^{r}$, is subexponential, then $V_1$ is also subexponential and
\begin{align} \label{Eq:SubExp}\mathbb{P}\left(V_1 > x\right) \sim  \frac{\rho_1}{\frac{c_2}{c_1+c_2} - \rho_1 } \mathbb{P}\left(B_1^{r} > x\right),\ x \uparrow \infty.\end{align}
\end{theorem}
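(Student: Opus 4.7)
The plan is to leverage the workload decomposition $V_1 \eqd V_{\text{\tiny M/G/1}} + Y$ from Corollary~\ref{Theo:DecompositionWorkload}, obtain subexponential tail asymptotics for each summand separately, and then combine them via closure of the subexponential class $\mathcal{S}$ under independent sums. For the $V_{\text{\tiny M/G/1}}$ term, the classical Pakes--Veraverbeke theorem yields, under subexponentiality of $B_1^{r}$,
\begin{equation*}
\mathbb{P}(V_{\text{\tiny M/G/1}} > x) \sim \frac{\rho_1}{1-\rho_1}\, \mathbb{P}(B_1^{r} > x), \quad x \uparrow \infty.
\end{equation*}

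The nontrivial step is extracting the tail of $Y$. Following the probabilistic interpretation in the remark after Corollary~\ref{Theo:DecompositionWorkload}, $Y$ is a Bernoulli mixture with atom at zero and, with probability $c_1/[(1-\rho_1)(c_1+c_2)]$, is distributed as $V_1(T_1+T_2)$, which by PASTA equals the queue-1 workload at an arbitrary epoch of a visit to queue~2. Substituting $\lambda_1(1-\tilde{b}_1(s)) = \rho_1 s\, \tilde{b}_1^{r}(s)$ into the LST~\eqref{finaltotal} and expanding as $s\downarrow 0$ gives, after routine algebra,
\begin{equation*}
1 - \mathbb{E}(e^{-sV_1(T_1+T_2)}) \sim \frac{(c_1+c_2)\rho_1}{c_2-\rho_1(c_1+c_2)}\bigl(1-\tilde{b}_1^{r}(s)\bigr), \quad s \downarrow 0.
\end{equation*}
Contrary to the regularly varying setting of Theorem~\ref{Theo:Heavy_tail}, no direct Tauberian theorem converts this LST relation into a tail asymptotic for a general subexponential $B_1^{r}$; this is the main obstacle. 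The cleanest workaround is probabilistic: one writes $V_1(T_1+T_2) = V_1(T_1) + S(T_2)$, where $S(T_2)$ is the compound Poisson work arriving during the exponential residing period $T_2$, and then applies Pakes--Veraverbeke to $V_1(T_1)$ together with the one-big-jump tail of a Poisson random sum (noting that $\mathbb{P}(B_1 > x) = o(\mathbb{P}(B_1^{r} > x))$, which holds automatically when $B_1^{r}$ is long-tailed) and closure of $\mathcal{S}$ under independent sums. The upshot, after multiplying by the mixing probability, is
\begin{equation*}
\mathbb{P}(Y > x) \sim \frac{\rho_1 c_1}{(1-\rho_1)\bigl(c_2-\rho_1(c_1+c_2)\bigr)}\, \mathbb{P}(B_1^{r} > x).
\end{equation*}

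Finally, independence of $V_{\text{\tiny M/G/1}}$ and $Y$ together with subexponentiality of both terms yields, by the closure property (Embrechts--Goldie--Veraverbeke),
\begin{equation*}
\mathbb{P}(V_1 > x) \sim \left[\frac{\rho_1}{1-\rho_1} + \frac{\rho_1 c_1}{(1-\rho_1)(c_2-\rho_1(c_1+c_2))}\right] \mathbb{P}(B_1^{r} > x),
\end{equation*}
and using $c_1+c_2-\rho_1(c_1+c_2)=(1-\rho_1)(c_1+c_2)$ the bracket collapses to $\frac{\rho_1(c_1+c_2)}{c_2-\rho_1(c_1+c_2)} = \frac{\rho_1}{c_2/(c_1+c_2)-\rho_1}$, which matches~\eqref{Eq:SubExp}. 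Subexponentiality of $V_1$ itself is inherited from the same closure property.
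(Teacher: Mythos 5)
Your overall architecture (decomposition $V_1 \overset{\text{d}}{=} V_{\text{\tiny M/G/1}} + Y$, per-term tail asymptotics, then tail-equivalence/closure of the subexponential class for independent sums) is coherent, your mixture representation of $Y$ is consistent with \eqref{Eq:D9}, and your constants do collapse to the coefficient in \eqref{Eq:SubExp}. However, there is a genuine gap at the one step that carries all the difficulty: the tail of $Y$. Your claimed relation $\mathbb{P}(Y>x)\sim \frac{\rho_1 c_1}{(1-\rho_1)(c_2-\rho_1(c_1+c_2))}\mathbb{P}(B_1^r>x)$ is, via the mixture weight $c_1/[(1-\rho_1)(c_1+c_2)]$, equivalent to asserting $\mathbb{P}(V_1(T_1+T_2)>x)\sim \frac{\rho_1}{c_2/(c_1+c_2)-\rho_1}\mathbb{P}(B_1^r>x)$ --- which is essentially the statement of the theorem itself. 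The tool you invoke to get it, ``Pakes--Veraverbeke applied to $V_1(T_1)$,'' does not apply: $V_1(T_1)$ is not the stationary workload of an M/G/1 queue but the workload of a queue whose server is available only during exponential visit periods; a literal application of Pakes--Veraverbeke would produce the coefficient $\rho_1/(1-\rho_1)$ (drain rate $1-\rho_1$), whereas the correct effective drain rate is $\frac{c_2}{c_1+c_2}-\rho_1$, and establishing precisely this is the content of the theorem. Writing $V_1(T_1+T_2)=V_1(T_1)+S(T_2)$ and controlling $S(T_2)$ (that part of your argument is fine, including $\mathbb{P}(B_1>x)=o(\mathbb{P}(B_1^r>x))$ under long-tailedness) only shows that $V_1(T_1)$ and $V_1(T_1+T_2)$ are tail-equivalent; it gives no handle on their common tail. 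So the key asymptotic is asserted rather than proved, and the argument is circular. You correctly note that the Tauberian route of Theorem \ref{Theo:Heavy_tail} is unavailable outside regular variation, but you then need either a genuinely applicable generalization of Veraverbeke's theorem for queues with this type of service interruption (with the reduced drain rate), or an independent proof.

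The paper proceeds quite differently: it bypasses the decomposition entirely and gives a direct ``single big jump'' sample-path argument for $V_1$ --- a large workload at time $0$ is caused by one large service requirement arriving at time $-y$, after which the workload drains at rate $\frac{c_2}{c_1+c_2}-\rho_1$; integrating $\mathbb{P}(B_1>x+y(\frac{c_2}{c_1+c_2}-\rho_1))$ against the Poisson intensity $\lambda_1\,\mathrm{d}y$ and changing variables gives \eqref{Eq:SubExp} immediately. The paper is explicit that this is a heuristic, to be made rigorous via matching lower bounds (law of large numbers) and upper bounds (ruling out multi-jump scenarios, as in Zwart's thesis). If you want to salvage your route, the honest fix is to prove the asymptotics of $\mathbb{P}(Y>x)$ (equivalently of the workload seen during non-serving periods) by exactly such a one-big-jump argument --- at which point the decomposition buys you little compared with the paper's direct approach, though it does cleanly explain how the coefficient splits into the M/G/1 part $\rho_1/(1-\rho_1)$ and the vacation part, consistent with \eqref{Eq:Heavy_tail7} in the regularly varying case.
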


\noindent
{\em Heuristic proof.}
The asymptotic relation in~\eqref{Eq:SubExp} can be proved formally using sample-path techniques along the following lines.
We assume the system is in stationarity and focus on the workload at time $t = 0$. If the workload level  $V_1$ at this time is very large, then that is most likely due to the prior arrival of a customer with a large service requirement $B_1$, at  some time $t = -y$.  We can observe that from time $t = -y$ onward, the workload decreases nearly linearly with rate  $\frac{c_2}{c_1+c_2} - \rho_1$. So in order for the workload at time $t = 0$ to exceed the level $x$, the service requirement $B_1$ must be larger than $x + y\left(\frac{c_2}{c_1+c_2} - \rho_1\right)$. Since customers arrive according to a Poisson process with rate $\lambda_1$, the distribution of the workload $V_1$ for large $x$ can be computed as
\begin{eqnarray}\label{Eq:Alt_SubExp1}
\mathbb{P}\left(V_1 > x\right) \sim \int_{y=0}^{\infty} \mathbb{P}\left(B_1 > x + y\left(\frac{c_2}{c_1+c_2} - \rho_1 \right)\right) \lambda_1  \mathrm{d}y.
\end{eqnarray}
A change of variable $z :=  x + y\left(\frac{c_2}{c_1+c_2} - \rho_1 \right)$ in Equation \eqref{Eq:Alt_SubExp1} yields
\begin{align}
\mathbb{P}\left(V_1 > x\right) \sim& \frac{\lambda_1}{\frac{c_2}{c_1+c_2} - \rho_1} \int_{z = x}^{\infty} \mathbb{P}\left(B_1 > z \right)   \mathrm{d}z \nonumber\\
=& \frac{\lambda_1 \mathbb{E}(B_1)}{\frac{c_2}{c_1+c_2} - \rho_1} \int_{z = x}^{\infty} \frac{\mathbb{P}\left(B_1 > z \right) }{\mathbb{E}(B_1)}  \mathrm{d}z
= \frac{\rho_1}{\frac{c_2}{c_1+c_2} - \rho_1}\mathbb{P}\left(B^{r}_1 > x \right),
\label{Eq:Alt_SubExp2}
\end{align}
which leads to Relation~\eqref{Eq:SubExp}.\\
This proof can be made rigorous by providing lower and upper bounds for $\mathbb{P}(V_1 > x)$
that in the limit coincide. The lower bound  is easily obtained by using the law of large numbers. The upper bound is more difficult; one has to give a formal version of the statement {\em ``exceedance of a high level $x$ happens as a consequence of a single big jump''}, and one has to show that other exceedance scenarios (like two rather big jumps) do not contribute to the asymptotics of the exceedance probability. We refer to \cite[Section~2.4]{zwart2001queueing} for a detailed exposition of this technique.

\begin{remark}
	Note that, indeed, Relation~\eqref{Eq:SubExp} contains the result of Theorem \ref{Theo:Heavy_tail} as a special case, since $B_1$ being regularly varying at infinity of index $-\nu$, with $\nu \in (1, 2)$, has a subexponential distribution. In this regularly varying case, we have
	\begin{equation*}
	\mathbb{P}\left(B^{r}_1 > x \right) =  \int_{z = x}^{\infty} \frac{\mathbb{P}\left(B_1 > z \right) }{\mathbb{E}(B_1)}  \mathrm{d}z \sim  \frac{1}{{\mathbb{E}(B_1)}}\int_{z = x}^{\infty} z^{-\nu} L(z)  \mathrm{d}z,\ x \uparrow \infty.
	\end{equation*}
	In the above equation by applying the regular varying function property from \cite[p.~26,~Proposition 1.5.8]{bingham1989regular}, we get
	\begin{equation}\label{Eq:Alt_SubExp3}
	\mathbb{P}\left(B^{r}_1 > x \right)  \sim \frac{1}{\mathbb{E}(B_1)(\nu-1)} x^{1 - \nu} L(x),\ x \uparrow \infty.
	\end{equation}
	Combining \eqref{Eq:Alt_SubExp2} and \eqref{Eq:Alt_SubExp3}, we obtain Theorem \ref{Theo:Heavy_tail}.
\end{remark}

\noindent
Now we concentrate on a heavy traffic limit theorem for $V_1$ in the heavy tailed case. To do this analysis, we first scale $V_1$ by the coefficient of contraction $\Delta(\rho_1)$. Similarly to \cite[p.~188,~Equation~(4.24)]{boxma1999heavy}, we define the coefficient of contraction $\Delta(\rho_1)$ as the unique root of the following equation in $x$
\begin{equation}\label{Eq:coefficients_contraction}
x^{\nu - 1} L\left(\frac{1}{x}\right) = \frac{\frac{c_2}{c_1 + c_2} - \rho_1}{\rho_1}, ~ x > 0,
\end{equation}
such that $\Delta(\rho_1) \downarrow 0$ for $\rho_1 \uparrow \frac{c_2}{c_1 + c_2}$.

\begin{theorem}\label{Theo:HeavyTraffic_Htail}
	If the service time distribution of the random variable $B_1$ is regularly varying of index $-\nu$, with $\nu \in (1, 2)$, then the heavy traffic limiting distribution of workload $V_1$ of the first queue in the heavy tailed case is given by the Mittag-Leffler distribution:
	\begin{equation}\label{Eq:HeavyTraffic_Htail}
	\lim\limits_{\rho_1 \uparrow \frac{c_2}{c_1 + c_2}} \mathbb{E}\left(e^{-s \Delta(\rho_1)V_1}\right) = \frac{1}{1 + (\mathbb{E}(B_1))^{\nu - 1}s^{\nu - 1}}.
	\end{equation} 	  	
\end{theorem}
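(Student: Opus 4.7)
The natural plan is to exploit the workload decomposition $V_1\eqd V_{\text{\tiny M/G/1}}+Y$ of Corollary \ref{Theo:DecompositionWorkload} and handle the two factors separately:
\begin{equation*}
\mathbb{E}(e^{-s\Delta(\rho_1)V_1}) = \mathbb{E}(e^{-s\Delta(\rho_1)V_{\text{\tiny M/G/1}}})\,\mathbb{E}(e^{-s\Delta(\rho_1)Y}).
\end{equation*}
The first factor tends to $1$: as $\rho_1\uparrow c_2/(c_1+c_2)<1$, the corresponding M/G/1 queue is \emph{not} in heavy traffic, its LST is continuous at the origin, and $\Delta(\rho_1)\downarrow 0$ drives the argument to zero. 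All the content of the theorem is therefore carried by the second factor.

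For that factor, I would take the explicit LST \eqref{Eq:D9} (or equivalently \eqref{Eq:HTraffic5}), substitute $s\mapsto s\Delta(\rho_1)$, and read off the leading behaviour as $\alpha:=c_2/(c_1+c_2)-\rho_1\downarrow 0$. The key ingredients are (i) the Tauberian expansion already used in the proof of Theorem \ref{Theo:Heavy_tail}, $\lambda_1(1-\tilde b_1(s))=\rho_1 s + \lambda_1\Gamma(1-\nu)s^\nu L(1/s)+o(s^\nu L(1/s))$ as $s\downarrow 0$; (ii) the uniform convergence theorem for slowly varying functions, giving $L(1/(s\Delta(\rho_1)))\sim L(1/\Delta(\rho_1))$ locally uniformly in $s>0$; and (iii) the defining identity \eqref{Eq:coefficients_contraction}, rewritten as $\Delta(\rho_1)^\nu L(1/\Delta(\rho_1))=\Delta(\rho_1)\,\alpha/\rho_1$. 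Inserting these into the denominator of \eqref{Eq:D9} shows that the linear-in-$s$ contribution $-s(c_1+c_2)\alpha\,\Delta(\rho_1)$ and the heavy-tail contribution $(c_1+c_2)\lambda_1\Gamma(1-\nu)s^\nu\Delta(\rho_1)\alpha/\rho_1$ live on exactly the same scale $\Delta(\rho_1)\alpha$; this is the balance the coefficient of contraction was designed to produce.

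Factoring $\Delta(\rho_1)\alpha$ out of the denominator cancels the $s\Delta(\rho_1)c_1$ in the numerator, and the prefactor $(c_2-\rho_1(c_1+c_2))/[(1-\rho_1)(c_1+c_2)]=\alpha/(1-\rho_1)$ of \eqref{Eq:D9} converts the surviving $1/\alpha$ into an $\alpha$-free expression, while the bare $\alpha/(1-\rho_1)$ piece vanishes. With $1-\rho_1\to c_1/(c_1+c_2)$ the remaining constants collapse into the explicit Mittag-Leffler form $1/(1+C s^{\nu-1})$ of \eqref{Eq:HeavyTraffic_Htail}.

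The main obstacle is the order bookkeeping: one must check that every discarded term is genuinely $o(\Delta(\rho_1)\alpha)$. The critical check concerns the $(\lambda_1(1-\tilde b_1))^2$ contribution, which is $O(\Delta(\rho_1)^2)$; the ratio $\Delta(\rho_1)^2/(\Delta(\rho_1)\alpha)=\Delta(\rho_1)^{2-\nu}/(\rho_1 L(1/\Delta(\rho_1)))$ vanishes because $\nu<2$, so this quadratic piece drops out, and the $-s\Delta(\rho_1)$ hiding inside $(c_1+c_2-s\Delta(\rho_1))$ together with the $o(\cdot)$ remainder in (i) are handled in the same way. Ingredient (ii) is what actually licenses passing the limit inside the LST uniformly on $s$-compacts. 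Once these estimates are in place, what remains is pure algebra.
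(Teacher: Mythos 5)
Your proposal is correct and follows essentially the same route as the paper's proof: the decomposition \eqref{Eq:HTraffic0} with the M/G/1 factor tending to one, substitution of $s\Delta(\rho_1)$ into \eqref{Eq:D9}, the regular-variation expansion of $\lambda_1(1-\tilde b_1(\cdot))$, the defining relation \eqref{Eq:coefficients_contraction} together with uniform convergence of the slowly varying factor, and the same order bookkeeping (the $\left(\lambda_1(1-\tilde b_1)\right)^2$ term and the $O(s\Delta(\rho_1))$ corrections are $o(\Delta(\rho_1)\alpha)$ since $\nu<2$). The only difference is that the paper imports the key asymptotic ingredients from \cite{boxma1999heavy} (Lemma 5.1(iv) and Equation (4.22) there, whose normalization of $L$ produces the constant $(\mathbb{E}(B_1))^{\nu-1}$ in \eqref{Eq:HeavyTraffic_Htail}), whereas you derive them directly from the Tauberian theorem already used for Theorem \ref{Theo:Heavy_tail}, which under Definition 5.1 yields the coefficient $-\Gamma(1-\nu)/\mathbb{E}(B_1)$ for the $s^{\nu-1}$ term; this is a normalization convention inherited from \cite{boxma1999heavy}, not a flaw in your argument.
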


\begin{proof} We can obtain  the heavy traffic limit in the heavy tailed case by using the workload decomposition property \eqref{Theo:DecompositionWorkload} and its LST version (\ref{Eq:HTraffic0}). The heavy traffic limit can be computed by replacing $s$ by $s \Delta(\rho_1)$ in Equation \eqref{Eq:HTraffic0} and taking the limit $\rho_1 \uparrow \frac{c_2}{c_1 + c_2}$, which yields
	\begin{equation}\label{Eq:HeavyTraffic_Htail1}
	\lim\limits_{\rho_1 \uparrow \frac{c_2}{c_1 + c_2}} \mathbb{E}\left(e^{-s \Delta(\rho_1)V_1}\right) = \lim\limits_{\rho_1 \uparrow \frac{c_2}{c_1 + c_2}} \mathbb{E}\left(e^{-s \Delta(\rho_1) V_{\text{\tiny M/G/1}}}\right) \mathbb{E}\left(e^{-s \Delta(\rho_1)Y}\right).
	\end{equation}
	Just as in the light tailed case (cf. Theorem \ref{Theo:Heavy_traffic}),
	the contribution of $V_{\text{\tiny M/G/1}}$ becomes negligible compared to the contribution of $Y$. To calculate the limit for the second factor in \eqref{Eq:HeavyTraffic_Htail1}, we use \eqref{Eq:D9}.
	\begin{equation}\label{Eq:HeavyTraffic_Htail3}
	\begin{split}
	&\mathbb{E}\left(e^{-s\Delta(\rho_1)Y} \right) \\ &= \frac{\frac{c_2 }{c_1 + c_2} - \rho_1}{1 - \rho_1} \left[1 - \frac{c_1}{c_1 + c_2} \frac{1}{\frac{f(s\Delta(\rho_1))}{s\Delta(\rho_1)} - \frac{f(s\Delta(\rho_1))}{c_1 + c_2} + \frac{s\Delta(\rho_1)}{c_1 + c_2}\left(\frac{f(s\Delta(\rho_1))}{s\Delta(\rho_1)}\right)^2 - \frac{c_2}{c_1 + c_2}}\right],
	\end{split}
	\end{equation}
	with $f(s\Delta(\rho_1)) = \frac{\rho_1(1 - \tilde{b}_1(s\Delta(\rho_1)))}{\mathbb{E}(B_1)}$. Taking the limit $\rho_1 \uparrow \frac{c_2}{c_1 + c_2}$ in \eqref{Eq:HeavyTraffic_Htail3} yields
	\begin{equation}\label{Eq:HeavyTraffic_Htail4}
	\lim\limits_{\rho_1 \uparrow \frac{c_2}{c_1 + c_2}} \mathbb{E}\left(e^{-s\Delta(\rho_1)Y} \right) = 	 -\lim\limits_{\rho_1 \uparrow \frac{c_2}{c_1 + c_2}} \frac{c_1\left(\frac{c_2 }{c_1 + c_2} - \rho_1\right)}{(c_1 + c_2)(1 - \rho_1)} \frac{1}{\frac{f(s\Delta(\rho_1))}{s\Delta(\rho_1)} - \frac{c_2}{c_1 + c_2}},
	\end{equation}	
	since $ \frac{s\Delta(\rho_1)}{c_1 + c_2}\left(\frac{f(s\Delta(\rho_1))}{s\Delta(\rho_1)}\right)^2 \to 0$, $f(s\Delta(\rho_1)) \to 0$ and $\Delta(\rho_1) \to 0$ when $\rho_1 \uparrow \frac{c_2}{c_1 + c_2}$. After rearranging the terms of \eqref{Eq:HeavyTraffic_Htail4} we get
	\begin{equation}\label{Eq:HeavyTraffic_Htail5}
	\begin{split}
	&\lim\limits_{\rho_1 \uparrow \frac{c_2}{c_1 + c_2}} \mathbb{E}\left(e^{-s\Delta(\rho_1)Y} \right) \\ &= -\lim\limits_{\rho_1 \uparrow \frac{c_2}{c_1 + c_2}} \frac{c_1}{(c_1 + c_2)(1 - \rho_1)} \frac{1}{\frac{1}{\frac{c_2}{c_1 + c_2} - \rho_1} \left[\frac{f(s\Delta(\rho_1))}{s\Delta(\rho_1)} - \frac{c_2}{c_1 + c_2} \right]}.
	\end{split}
	\end{equation}
	Since $B_1$ is regularly varying, we get by using \cite[Lemma~5.1~(iv)]{boxma1999heavy},
	\begin{equation}\label{Eq:HeavyTraffic_Htail6}
	\begin{split}
	&\lim\limits_{\rho_1 \uparrow \frac{c_2}{c_1 + c_2}} \frac{1}{\frac{c_2}{c_1 + c_2} - \rho_1} \left[\frac{f(s\Delta(\rho_1))}{s\Delta(\rho_1)} - \frac{c_2}{c_1 + c_2} \right] \\ &= - \lim\limits_{\rho_1 \uparrow \frac{c_2}{c_1 + c_2}}  \left(1 + \frac{\rho_1}{\frac{c_2}{c_1 + c_2} - \rho_1} \left[1 - \frac{1 - \tilde{b}_1(s\Delta(\rho_1))}{s\mathbb{E}(B_1)\Delta(\rho_1)}\right]\right).	
	\end{split}
	\end{equation}
	Using \cite[p.~188,~Equation~(4.22)]{boxma1999heavy}, we know that
	\begin{equation}\label{Eq:HeavyTraffic_Htail7}
	1 - \frac{1 - \tilde{b}_1(s\Delta(\rho_1))}{s\mathbb{E}(B_1)\Delta(\rho_1)} \sim (\mathbb{E}(B_1) s \Delta(\rho_1))^{\nu - 1} L\left(\frac{1}{s\mathbb{E}(B_1)\Delta(\rho_1)}\right),\ s \downarrow 0.
	\end{equation}
	From the definition of the coefficient of contraction $\Delta(\rho_1)$ as the unique root of Equation \eqref{Eq:coefficients_contraction} such that $\Delta(\rho_1) \downarrow 0$ for $\rho_1 \uparrow \frac{c_2}{c_1 + c_2}$, we have
	\begin{equation}\label{Eq:HeavyTraffic_Htail8}
	(\Delta(\rho_1))^{\nu - 1} L\left(\frac{1}{\Delta(\rho_1)}\right) = \frac{\frac{c_2}{c_1 + c_2} - \rho_1}{\rho_1}.
	\end{equation}
	Furthermore, from the definition of a slowly varying function $L(\cdot)$, we get $\frac{L\left(\frac{1}{s\mathbb{E}(B_1)\Delta(\rho_1)}\right) }{L\left(\frac{1}{\Delta(\rho_1)}\right)} \to 1$, when $\Delta(\rho_1) \downarrow 0$.
	Now by combining \eqref{Eq:HeavyTraffic_Htail5} - \eqref{Eq:HeavyTraffic_Htail8} we get
	\begin{equation}\label{Eq:HeavyTraffic_Htail10}
	\lim\limits_{\rho_1 \uparrow \frac{c_2}{c_1 + c_2}} \mathbb{E}\left(e^{-s\Delta(\rho_1)Y} \right)	= \frac{1}{1 + (\mathbb{E}(B_1))^{\nu - 1} s^{\nu - 1}}.
	\end{equation}
	Substituting \eqref{Eq:HeavyTraffic_Htail10} in Equation \eqref{Eq:HeavyTraffic_Htail1} concludes the proof of the theorem.
\end{proof}

\begin{remark}
	In \cite{boxma1999heavy} a class of service time distributions is considered that is slightly larger than the class of regularly varying distributions. Theorem  \ref{Theo:HeavyTraffic_Htail} can be seen to hold under these conditions as well.
\end{remark}

\begin{remark}
At this stage it may be appropriate to discuss some of the advantages and disadvantages of working with timers instead of
traditional polling disciplines like exhaustive, gated or $k$-limited.
An {\em advantage} of working with timers appears to be the following.
It is proven in \cite{BDR} for $N$-queue cyclic polling models with exhaustive or gated service,
that if the heaviest service time distribution (say, at $Q_M$) is regularly varying of index $- \nu$, then {\em all} waiting time distributions are regularly varying of index $1-\nu$. This is intuitively clear: There is a positive probability of a customer in $Q_1$ arriving during a service time of $Q_M$, and then its waiting time includes a residual type-$M$ service time -- which
is regularly varying of index $1-\nu$.
That intuition also indicates that a similar tail behavior will occur for disciplines like $k$-limited.

On the contrary, in our model with exponential timers, the service time distribution at one queue has no effect at all on the waiting time distribution or workload distribution at the other queue (for the workload, this is also seen from Formula (\ref{MLQ1}), which does not involve $\tilde{b}_2(s)$).
In particular, the waiting time distribution at $Q_1$ will asymptotically behave exponentially if the service time tail is exponential; and if the service time distribution at $Q_1$ is regularly varying of index $-\zeta$ while the service time distribution at $Q_2$ is regularly varying of index $-\nu$
with $\zeta > \nu$, then the waiting time distribution will be regularly varying of index $1-\zeta < 1 - \nu$.

Next to protection against heavy tails at other queues, there is also some protection
against long mean service times at other queues.
If $\E B_2$ is much larger than $\E B_1$, then a type-$1$ customer in an ordinary polling model may experience a long mean delay because of the presence of type-$2$ customers. However, when timers are used, a customer of type $1$ with a short service time will not suffer much from the presence of type-$2$ customers with long mean service times. This is a similar protection phenomenon as round robin or processor sharing protecting short customers against having to wait long for a customer with a very long service time.

A {\em disadvantage} of using timers is that the system is not work conserving, and hence operates in a sense less efficient than one would wish. This is revealed in the fully symmetric case.
It follows from (\ref{Eq:Mean}) that the mean workload at $Q_1$, in the fully symmetric case (equal arrival rates, service time
distributions
and mean visit periods), equals
\begin{equation}
\E V_1 = \frac{\rho}{1-\rho} \frac{\E B_1^2}{2 \E B_1} + \frac{1}{4c_1} ,
\end{equation}
so that the mean total workload equals
\begin{equation}
\E V_1 + \E V_2 = 2 \frac{\rho}{1-\rho} \frac{\E B_1^2}{2 \E B_1} + \frac{1}{2c_1} .
\end{equation}
On the other hand, in a symmetric two-queue polling model with, e.g., exhaustive or gated service,
one has work conservation, and hence the mean total workload equals
\begin{equation}
\E V_{total} = \frac{\rho}{1-\rho} \frac{\E B_1^2}{2 \E B_1}.
\end{equation}
This is less than half the value of $\E V_1 + \E V_2$. The main reason for this is the fact that the server is sometimes idle although there is work at the other queue. This even holds when $c_1 =c_2 \rightarrow \infty$;
$Q_1$ operates effectively as if the server works at half speed for it, or as if the service times are twice as long.
\end{remark}

\section{Joint workload distribution}\label{sec:JointWork}
So far we have focused on the marginal workload distribution at the first queue.
A much harder problem is to determine the steady-state joint workload distribution.
In this section, we begin the exploration of this problem, outlining
a possible approach as well as the mathematical complications arising.

Let $\tilde{v}(s_1,s_2):= \mathbb{E}({e}^{-s_1V_1(T_1+T_2) -s_2 V_2(T_1+T_2)})$
denote the steady-state joint workload LST at endings of visit periods at the second queue.
Reiterating Steps 1 - 4 of Section~\ref{sec:marginal}, but now taking both workloads into account,
leads after lengthy calculations to the following functional equation
\begin{equation}
\begin{split}
\tilde{v}(s_2,s_1) =& \frac{c_1}{c_1 -s_1 + \lambda_1(1 - \tilde{b}_1(s_1)) + \lambda_2(1 - \tilde{b}_2(s_2))}  \\ &[\tilde{v}(s_1,s_2) - \frac{s_1}{\omega_1(s_2)} \tilde{v}(\omega_1(s_2),s_2)],
~~~\mathrm{Re}~s_1,\ \mathrm{Re}~s_2 \geq 0,
\label{eq6.1}
\end{split}
\end{equation}
with
$\omega_1(s_2) := c_1 + \lambda_2(1 - \tilde{b}_2(s_2)) + \lambda_1(1 - \mu(\zeta,1))$;
as before, $\mu(s,1)$ is the busy period LST of the M/G/1 queue in isolation corresponding to the first queue.

Let us now restrict ourselves to the fully symmetric case $c_1 = c_2 = c$, $\lambda_1 = \lambda_2 = \lambda$,
$\tilde{b}_1(s)=\tilde{b}_2(s)=\tilde{b}(s)$.
Formula (\ref{eq6.1}) then becomes
\begin{equation}
\begin{split}
\tilde{v}(s_2,s_1) =& \frac{c}{c -s_1 + \lambda(1 - \tilde{b}(s_1)) + \lambda(1 - \tilde{b}(s_2))} \\ &[\tilde{v}(s_1,s_2) - \frac{s_1}{\omega_1(s_2)} \tilde{v}(\omega_1(s_2),s_2)].
\label{eq6.2}
\end{split}
\end{equation}
Taking $s_1 = s_2$ in \eqref{eq6.2} allows us to express $\tilde{v}(\omega_1(s_2),s_2)$ in terms of $\tilde{v}(s_2,s_2)$, thus reducing (\ref{eq6.2}) to
\begin{equation}
\begin{split}
\tilde{v}(s_2,s_1) =& \frac{c}{c -s_1 +  \lambda(1 - \tilde{b}(s_1)) + \lambda(1 - \tilde{b}(s_2))}  \\ &[\tilde{v}(s_1,s_2) - \frac{s_1}{s_2}  \frac{s_2 - 2 \lambda(1 - \tilde{b}(s_2))}{c} \tilde{v}(s_2,s_2)].
\label{eq6.3}
\end{split}
\end{equation}
Interchanging all indices, one obtains a mirrored equation of (\ref{eq6.3}), and the two equations combined yield
\begin{equation}\label{eq6.4}
\begin{split}
&K(s_1,s_2) \tilde{v}(s_1,s_2) \\ &=
\frac{s_2}{s_1} \left(s_1 - 2\lambda(1 - \tilde{b}(s_1))\right)\left(c-s_1+\lambda(1 - \tilde{b}(s_1)) + \lambda(1 - \tilde{b}(s_2))\right) \tilde{v}(s_1,s_1) \\
& + \frac{s_1}{s_2} c \left(s_2 - 2\lambda(1 - \tilde{b}(s_2))\right) \tilde{v}(s_2,s_2), ~~\mathrm{Re}~s_1,~ \mathrm{Re}~s_2 \geq 0,
\end{split}
\end{equation}
with $K(s_1,s_2)=c^2 -\left(c - s_1 + \lambda(1 - \tilde{b}(s_1)) + \lambda(1 - \tilde{b}(s_2))\right)\left(c-s_2 +  \lambda(1 - \tilde{b}(s_1))\right. \\ \left.+ \lambda(1 - \tilde{b}(s_2))\right)$.
This is a so-called boundary value problem equation.
Equations of this type have been studied in the monograph \cite{cohen2000boundary}.
There an approach is outlined that, for the present problem, amounts to the following global steps:
\begin{description}
	\item[Step 1:] Consider the zeros of the {\em kernel} equation $K(s_1,s_2)$,
	that have $\mathrm{Re}~s_1,\ \mathrm{Re}~s_2 \geq 0$.
	For such pairs $(s_1,s_2)$, $\tilde{v}(s_1,s_2)$ is analytic, and hence, for those pairs, the right hand side of (\ref{eq6.4}) is equal to zero.
	\item[Step 2:] For the pairs $(s_1,s_2)$ satisfying Step 1, one needs to translate the fact that the right hand side of Equation \eqref{eq6.4} is zero into a Riemann
	or Riemann-Hilbert boundary value problem.
	The solution of such a problem yields $\tilde{v}(s_1,s_1)$ and $\tilde{v}(s_2,s_2)$. Then $\tilde{v}(s_1,s_2)$ follows via \eqref{eq6.4}.
\end{description}
Unfortunately, the above steps do not constitute a simple, straightforward recipe.
For example, several choices of zero pairs are possible in the present problem,
and it is not a priori clear what is the best choice.
A natural choice, due to the symmetry of the underlying problem, seems to be to restrict oneself to complex conjugate points, i.e., choose $(s_1,s_2) = (z,\bar{z})$.
The {\em kernel} then becomes
\[
K(z,\bar{z}) = c^2 -\left(c-z + 2\lambda \mathrm{Re}(1 - \tilde{b}(z))\right)\left(c- \bar{z} + 2\lambda \mathrm{Re}(1 - \tilde{b}(z))\right).
\]
Taking
\begin{equation}\label{Eq.contour}
c-z + 2\lambda \mathrm{Re}(1 - \tilde{b}(z)) = c {e}^{i \theta}, ~~~
c- \bar{z} + 2\lambda \mathrm{Re}(1 - \tilde{b}(z)) = c {e}^{-i \theta}, \ \theta\in[0, 2 \pi],
\end{equation}
indeed yields $K(z,\overline{z}) = K\left(z(\theta),\overline{z}(\theta)\right) = 0$, for all $\theta\in[0, 2 \pi]$,
while it is readily checked that for each such $\theta$ there is a unique $z(\theta)$ with $\mathrm{Re}~z(\theta) \geq 0$.

Turning to Step 2, one sees that the $z(\theta)$ satisfying \eqref{Eq.contour} describe a closed contour, say $L$, in the right half plane, for $\theta: 0 \rightarrow 2 \pi$,
while the fact that the right hand side of \eqref{eq6.4}, after a division by $s_1s_2 = z \bar{z}$, is zero for all these $(s_1,s_2) = (z(\theta), \overline{z}(\theta))$
translates into the following relation
\begin{equation}
\mathrm{Re} \left[\frac{1}{z} \left(1 - \rho \tilde{b}^r(z))\right) \tilde{v}(z,z) {e}^{\frac{1}{2} i \theta}\right] = 0,\ z \in L,
\label{eq6.5}
\end{equation}
with $\tilde{v}(z,z)$ and  $\tilde{b}^r(z) = \frac{1 - \tilde{b}(z)}{z \mathbb{E} B}$ analytic inside $L$.
The expression inside the square brackets of (\ref{eq6.5}) is analytic inside $L$ apart from the pole at $z=0$.
This results in a similar boundary value problem as has been treated in \cite{BoxmaVanHoutum}.
The solution of such a problem is known when $L$ is the unit circle. For other closed contours, one needs a conformal mapping of
that contour onto the unit circle;
several procedures are available for obtaining such conformal mappings.
\begin{remark}
In a future study we aim to handle all the technicalities which arise in treating this boundary value problem with a pole.
	When we manage to solve the present symmetric problem, we are still faced with the more general asymmetric two-queue problem.
	Subsequently, one could turn to the joint {\em queue length} distribution.
	However, a complication there is that a switch between queues might occur during a service time,
	forcing one to keep track of the length of the residual service time.
	From that perspective, workload seems to be an easier quantity than queue length.
\end{remark}

In the next section, we turn our focus to the joint queue length distribution, but restricting ourselves to exponential service time distributions,
so we do not need to keep track of the residual service time.
Instead of pursuing a boundary value approach, we explore a perturbation approach, which allows us to
derive an analytic expansion for the joint queue length distribution.


\section{Joint queue length distribution}\label{sec:JointQueue}
In this section, we turn our attention to the steady-state joint queue length distribution, restricting ourselves to {\em exponential  service requirement distributions} in both queues, with rates $\mu_i=1/\mathbb{E}(B_i)$, $i=1,2$, respectively. Under this assumption, we do not need to keep track of the residual service times, which simplifies the analysis. However, a direct analytic derivation of the joint queue length distribution (or its PGF) turns out to be as challenging as the analysis presented in Section \ref{sec:JointWork}.
To address this issue, in this section, we explore the use of parametric perturbation for the derivation of the joint queue length distribution.
In what follows, we use the framework developed in Altman et al. \cite{altman2004perturbation};
we perturb the service and arrival rates by a common parameter, denoted by $\varepsilon\geq0$, i.e., the perturbed service rate of the customers in queue $i$ is $\varepsilon \mu_i$, $i=1,2$, and arrivals occur according to two independent Poisson processes with perturbed rates $\varepsilon \lambda_i$, $i = 1, 2$.  The parameters that are not perturbed are $c_i, i=1,2$ i.e., the rates of the exponentially distributed durations that the server spends in each queue.
Note that the stability condition \eqref{Stability} is not affected by this scaling.

\noindent
The perturbed process is a continuous time Markov chain defined on the state space
\begin{equation*}
\mathcal{S} = \big\{(n_1, n_2, k),\ n_1, n_2 \in \mathbb{N},\ k \in \{1, 2\} \big\},
\end{equation*}
in which $n_i$ denotes the queue length in queue $i$, $i=1,2$, and the third element in the state space description reports the queue in which the server is active. Furthermore, let $\bm{G}(\varepsilon)$ denote the generator of the perturbed Markov process. We decompose this perturbed generator into the unperturbed generator $\bm{G}^{(0)}$ and the perturbation matrix $\bm{G}^{(1)}$,
\begin{equation}\label{MCPG}
\bm{G}(\varepsilon) = \bm{G}^{(0)} + \varepsilon \bm{G}^{(1)},
\end{equation}
so as to investigate the dependence of the stationary joint queue length distribution on the parameter $\varepsilon$.
The unperturbed generator $\bm{G}^{(0)}$ corresponds to the states depicting a change of the state of the server from one queue to the other; it is given by
\begin{eqnarray}\label{Eq:G0_matrix}
\bm{G}^{(0)} = \begin{bmatrix}
\bm{C} & \mathbf{0}_{2 \times 2} & \cdots  \\
\mathbf{0}_{2 \times 2} & \bm{C}  & \cdots  \\
\vdots & \vdots  & \ddots  \\
\end{bmatrix},		
\end{eqnarray}
with $\bm{C} =  \begin{bmatrix} -c_1 & c_1 \\
c_2 & -c_2 \\
\end{bmatrix}$, and $\bm{0}_{2\times2}$ a $2\times2$ matrix of zeros. Throughout the remainder of the paper we use this notation with subscripts to indicate the dimension when needed.
When the dimension is clear from the context, the index is omitted; note that the dimension can be infinite. 	\\
The perturbation matrix $\bm{G}^{(1)}$ is defined in terms of its elements, with $n_1\geq0$, $n_2\geq0$, $k=1,2$,
\begin{equation}
\begin{split}
&\bm{G}^{(1)}_{(n_1, n_2, k), (n_1 + 1, n_2, k)} = \lambda_1, ~ \bm{G}^{(1)}_{(n_1, n_2, k), (n_1, n_2 + 1, k)} = \lambda_2, \\&
\bm{G}^{(1)}_{(n_1+1, n_2, 1), (n_1 , n_2, 1)} = \mu_1,~   \bm{G}^{(1)}_{(n_1, n_2+1, 2), (n_1, n_2, 2)} = \mu_2 ,\\&
\bm{G}^{(1)}_{(n_1, n_2, k), (n_1, n_2, k)} = -\left(\lambda_1 + \lambda_2 + \mu_k \one_{\{n_k \geq 1\}}\right),
\label{Eq:G1_matrix_first_queue}
\end{split}
\end{equation}
with $\one_{\{n_k \geq 1\}}$ an indicator function taking value 1, if $n_k \geq 1$, and 0, otherwise.

\noindent
In order to implement the framework of Altman et al.~\cite{altman2004perturbation}, it is convenient to first define the transition probability matrix $\bm{P}(\varepsilon) = \bm{I}+\Delta \bm{G}(\varepsilon)$ of the corresponding (uniformized) discrete time perturbed Markov chain ($\bm{I}$ being the identity matrix).
In order to simplify notation, in what follows, we assume without loss of generality that
\begin{equation}\label{uniformization}
\lambda_1 + \lambda_2 + \mu_1 + c_1 \leq 1 ~~ \mbox{and} ~~ \lambda_1 + \lambda_2 + \mu_2  + c_2 \leq 1.
\end{equation}
Note that indeed, this assumption simply entails a scaling of time.
Still, it allows us to take $\Delta=1$ and it ensures that
\begin{equation}\label{Pepsilon}
\bm{P}(\varepsilon) = \bm{I}+\bm{G}(\varepsilon),
\end{equation}
is a probability matrix for all $\varepsilon\in[0,1]$, which is convenient.
We remind the reader that our ultimate goal is to find (or approximate) the stationary measure belonging to $\bm{G}(1)$ (and, equivalently, of the discrete time counter part $\bm{P}(1)$).
In order to achieve that, we  first establish the analyticity of the stationary distribution for $\varepsilon$ in a punctured neighborhood of 0, cf.~Theorem~\ref{Theo:IPM} below.
We emphasize that it is not guaranteed that the stationary distribution will be analytic up to $\varepsilon=1$.
The analysis in~\cite{altman2004perturbation} gives a lower bound for the radius of convergence, which in general turns out to be rather conservative.
\\
Note that the perturbed transition probability matrix $\bm{P}(\varepsilon)$ can also be decomposed into the unperturbed probability matrix $\bm{P}^{(0)}$ and the perturbation matrix $\bm{G}^{(1)}$, with $\bm{P}^{(0)}=\bm{I}+\bm{G}^{(0)}$, i.e.,
\begin{align}\label{matrixP0}
\bm{P}^{(0)} = \begin{bmatrix}
\bm{I}_{2 \times 2}  + \bm{C} & \mathbf{0}_{2 \times 2} & \cdots  \\
\mathbf{0}_{2 \times 2} & \bm{I}_{2 \times 2}  + \bm{C}  & \cdots  \\
\vdots & \vdots  & \ddots  \\
\end{bmatrix}.
\end{align}
It is evident that the unperturbed process consists of several ergodic classes, making our setting fit the singular perturbation approach in~\cite{altman2004perturbation}.

\subsection{Singular perturbation analysis: outline}
Following the analysis performed in \cite{altman2004perturbation}, we now formulate four conditions based on which the invariant probability measure of the perturbed Markov chain, denoted by $\pi(\varepsilon)$, is derived. These four conditions guarantee the analyticity of $\pi(\varepsilon)$ in $\varepsilon$ in a punctured neighborhood of zero. Furthermore, they guarantee that the coefficients of the power series $\pi(\varepsilon)=\sum_{m = 0}^{\infty} \varepsilon^m \pi^{(m)}$ form a geometric sequence and, hence, that there exists a computationally stable updating formula for $\pi(\varepsilon)$, see \cite{altman2004perturbation}.
\\
In this subsection we only formulate the four conditions and give the main result of the section. The detailed mathematical proofs follow in the next subsection.

\begin{asmpt}\label{Assumption1}
	The unperturbed Markov chain consists of several (denumerable) ergodic classes and there are no transient states.
\end{asmpt}

\noindent
There is an ergodic class for each $\bm{i}\in \big\{(n_1, n_2),\ n_1, n_2 \in \mathbb{N}\big\}$, i.e., in an ergodic class, the numbers of customers in both queues are fixed.
All ergodic classes are identical, and consist of two states, $k\in\{1,2\}$, indicating the queue being served.

\begin{asmpt}\label{Assumption2}
	The Markov chains corresponding to the ergodic classes of the unperturbed Markov chain are \textit{uniformly} Lyapunov stable i.e., for each ergodic class there exist a strongly aperiodic state $\alpha\in\{1,2\}$ (with a strictly positive probability on the corresponding diagonal element of the transition matrix $\bm{I}+\bm{C}$, with the matrix $\bm{C}$ given in \eqref{Eq:G0_matrix}),  constants $0<\delta < 1$ and $b < \infty$, and a Lyapunov function $\bm{u} = (\begin{array}{c c} u_1 & u_2 \end{array})'$, with $u_i \geq 1$, $i = 1, 2$, such that
	\begin{equation} \label{AS2}
	(\bm{I} + \bm{C}) \bm{u} \leq \delta \bm{u} + b \bm{e}_{\alpha},
	\end{equation}
	with $\bm{e}_{\alpha}$ a vector with 1 in the entry belonging to state $\alpha$ and zero in the other entry.
\end{asmpt}
For the next condition, we first introduce the {\em aggregated} Markov chain \cite{courtois2014decomposability, delebecque1983reduction, pervozvanskii2013theory}, with generator $\bm{\Gamma}$, given in matrix form as follows
\begin{equation}\label{Eq:Genrator_aggregated_MC}
\bm{\Gamma} = \bm{V} \bm{G}^{(1)} \bm{W},
\end{equation}
with $\bm{V}$ and $\bm{W}$ defined as in \cite[p.~844]{altman2004perturbation}; $\bm{V}$ (resp., $\bm{W}$) is a matrix whose rows (columns) correspond to the ergodic classes and its columns (rows) to the states in $\mathcal{S}$.
The $\bm{i}$-th row of $\bm{V}$ is the invariant measure of the unperturbed Markov chain, given that the process starts in the $\bm{i}$-th ergodic class, with $\bm{i}\in \big\{(n_1, n_2),\ n_1, n_2 \in \mathbb{N}\big\}$, i.e.,
\begin{eqnarray}\label{Eq:V_matrix}
\bm{V} = \begin{bmatrix}
\bm{\tilde{C}} & 	\bm{0}_{1\times 2}  & \bm{0}_{1\times 2} &\cdots \\
\bm{0}_{1\times 2} & \bm{\tilde{C}}  & \bm{0}_{1\times 2} &\cdots\\
\bm{0}_{1\times 2} & \bm{0}_{1\times 2} & \bm{\tilde{C}}   &\cdots \\
\vdots & \vdots & \vdots & \ddots
\end{bmatrix},
\end{eqnarray}	
with $\bm{\tilde{C}} =  \begin{bmatrix} c_2/(c_1+c_2) & c_1/(c_1+c_2)\\
\end{bmatrix}$.
The $\bm{j}$-th column of $\bm{W}$ has ones in the components corresponding to the $\bm{j}$-th ergodic class and zeros in the other components, with $\bm{j}\in \big\{(n_1, n_2),\ n_1, n_2 \in \mathbb{N}\big\}$, i.e.,
\begin{eqnarray}\label{Eq:W_matrix}
\bm{W} &=& \begin{bmatrix}
\bm{1}_{2\times1}&\bm{0}_{2\times1}&\bm{0}_{2\times1}&\cdots\\
\bm{0}_{2\times1}&\bm{1}_{2\times1}&\bm{0}_{2\times1}&\cdots\\
\bm{0}_{2\times1}&\bm{0}_{2\times1}&\bm{1}_{2\times1}&&\cdots\\
\vdots & \vdots & \vdots & \ddots
\end{bmatrix}
\end{eqnarray}
with $\bm{1_{2\times1}}=\begin{bmatrix}1\\1\end{bmatrix}$.\\
Hence, for $n_1\geq0$, $n_2\geq0$, the elements of the generator matrix $\bm{\Gamma}$ are:
\begin{equation}
\begin{split}
&\bm{\Gamma}_{(n_1, n_2), (n_1 + 1, n_2)} = \lambda_1 , ~ \bm{\Gamma}_{(n_1, n_2), (n_1, n_2 + 1)} = \lambda_2,  \\ &\bm{\Gamma}_{(n_1 + 1, n_2), (n_1, n_2)} = \mu_1 \frac{c_2}{c_1 +c_2}, ~
\bm{\Gamma}_{(n_1, n_2 + 1), (n_1, n_2)} = \mu_2 \frac{c_1}{c_1 +c_2},  \\  &\bm{\Gamma}_{(n_1, n_2), (n_1, n_2)} = -\left(\lambda_1 + \lambda_2 + \mu_1 \frac{c_2}{c_1 +c_2} \one_{\{n_1 \geq 1\}} + \mu_2 \frac{c_1}{c_1 +c_2} \one_{\{n_2 \geq 1\}}\right).\label{GGamma}
\end{split}
\end{equation}	
It is convenient to think of the aggregated Markov chain as the limiting joint queue length process as $\varepsilon\to0$. In this limit, the server moves infinitely fast between the two queues, making them two independent M/M/1 queues with arrival rates $\lambda_i$ and service rates ${\mu_i}\frac{c_1c_2/{c_i}}{c_1+c_2}$, $i=1,2$. Based on this remark, one can immediately deduce that the invariant probability measure of the aggregated Markov chain is
\begin{align}\label{IPMAggr}
\bar{\pi}(n_1,n_2)=(1-\tilde{\rho}_1)\tilde{\rho}_1^{n_1}(1-\tilde{\rho}_2)\tilde{\rho}_2^{n_2},\ n_1,n_2\geq0,
\end{align}
with $\tilde{\rho}_i=\frac{\lambda_ic_i(c_1+c_2)}{\mu_ic_1c_2}$, $i=1,2$.
\\
We are now ready to state the third condition.
\begin{asmpt}\label{Assumption3}
	The aggregated Markov chain is irreducible and Lyapunov stable, i.e., there exist a strongly aperiodic state $\bar{\alpha}=(n_1,n_2)$ (with a strictly positive probability on the diagonal of the transition matrix $\bm{I}+ \bm{\Gamma}$, with the matrix $\bm{\Gamma}$ given in \eqref{GGamma}), constants $0 < \overline{\delta} < 1$, $\overline{b} < \infty$ and a Lyapunov function $\overline{\bm{u}}=\left(\overline{u}_{(n_1, n_2)}\right)_{(n_1, n_2)\in\mathbb{N}^2}$, with elements $\overline{u}_{(n_1, n_2)} \geq 1$, for all $n_1, n_2\geq0$, such that
	\begin{equation}\label{EqAssumption3}
	(\bm{I} + \bm{\Gamma}) \overline{\bm{u}} \leq \bar{\delta}\overline{\bm{u}} + \bar{b} \bm{e}_{\overline{\alpha}}.
	\end{equation}		
\end{asmpt}

\begin{asmpt}\label{Assumption4}
	The perturbation matrix $\bm{G}^{(1)}$ is $\tilde{\bm{u}}$-bounded (for $\tilde{u}_{\bm{i}k}=\overline{u}_{\bm{i}}u_k$, with $\bm{i}\in \big\{(n_1, n_2),\ n_1, n_2 \in \mathbb{N}\big\}$ and $k=1,2$) or, equivalently,
	\begin{align}
	\parallel \bm{G}^{(1)} \parallel_{\tilde{\bm{u}}} :=\sup_{\bm{s}\in\mathcal{S}}\tilde{u}_{\bm{s}}\,{}^{-1}\sum_{\overline{\bm{s}}\in\mathcal{S}} \left | \bm{G}^{(1)}_{\bm{s},\overline{\bm{s}}} \right |\tilde{u}_{\overline{\bm{s}}}
	\label{u-Bounded}
	\end{align}
	is bounded by some constant $g>0$, cf. \cite[p.~841]{altman2004perturbation}.
\end{asmpt}
Note that, because of the repetitive structure of $\bm{G}^{(0)}$, this assumption implies that $\bm{P}(\varepsilon)$ is $\tilde{\bm{u}}$-bounded for all $\varepsilon\geq0$.
\\

\noindent
We can now state the main theorem of the section, which is based on~\cite[p.~845, Theorem~4.1]{altman2004perturbation}.

\begin{theorem}\label{Theo:IPM}
	Under Conditions \ref{Assumption1}--\ref{Assumption4}, the perturbed Markov chain has a unique invariant probability measure, $\pi(\varepsilon)$, which is an analytic function of $\varepsilon$ in a neighborhood of 0,
	\begin{equation} \label{IPM1}
	\pi(\varepsilon) = \sum_{m = 0}^{\infty} \varepsilon^m \pi^{(m)}, ~~ \pi^{(m)} = \bar{\pi} \, \bm{V} \, \bm{U}^m,
	\end{equation}
	where $\bar{\pi}$ is the invariant probability measure of the aggregated Markov chain, cf. \eqref{IPMAggr}, and 			 	
	\begin{equation} \label{IPM2}
	\bm{U} = \bm{G}^{(1)} \bm{H} \left(\bm{I} + \bm{G}^{(1)}\, \bm{W} \,\bm{\Phi}\, \bm{V} \right),
	\end{equation}
	$\bm{V}$ and $\bm{W}$ are given in \eqref{Eq:V_matrix} and \eqref{Eq:W_matrix}, respectively, and $\bm{H}$ and $\bm{\Phi}$ the deviation matrices of the unperturbed and aggregated Markov chains, respectively, are given by
	\begin{equation}\label{Eq:H_Matrix}
	\bm{H} =  - \frac{1}{{(c_1 + c_2)}^2} \bm{G}^{(0)},
	\end{equation}	
	and
	\begin{equation}\label{Eq:Deviation_matrix_AggMC}
	\bm{\Phi} = \sum_{m = 0}^{\infty} \left[(\bm{I} + \bm{\Gamma})^m - \bm{\gamma}\right].
	\end{equation}
	Here $\bm{\gamma}$ is the ergodic projection of the aggregated Markov chain, with generator $\bm{\Gamma}$ given in \eqref{GGamma}, i.e.,
	\begin{equation*}
	\bm{\gamma}=\lim_{n\to\infty}\frac{1}{n}\sum_{m=1}^n (\bm{I}+\bm{\Gamma})^m.
	\end{equation*}
\end{theorem}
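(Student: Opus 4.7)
The plan is to invoke Theorem~4.1 of Altman et al.~\cite{altman2004perturbation} directly; this reduces the task to verifying Conditions~\ref{Assumption1}--\ref{Assumption4}, after which the expansion \eqref{IPM1}--\eqref{IPM2} and the representation \eqref{Eq:Deviation_matrix_AggMC} of $\bm{\Phi}$ are immediate, and only the explicit form \eqref{Eq:H_Matrix} of the deviation matrix $\bm{H}$ needs a short separate computation. I would begin by reading Condition~\ref{Assumption1} straight off the block-diagonal structure \eqref{Eq:G0_matrix}: each ergodic class consists of the two states $\{(n_1,n_2,1),(n_1,n_2,2)\}$ communicating through $\bm{C}$, and there are manifestly no transient states. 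For Condition~\ref{Assumption2}, since every ergodic class carries the same $2\times 2$ generator $\bm{C}$, uniform Lyapunov stability reduces to a single finite-state problem; the uniformization assumption \eqref{uniformization} guarantees $1-c_1>0$ and $1-c_2>0$, so both states are strongly aperiodic in $\bm{I}+\bm{C}$, and $\bm{u}=(1,2)'$ with $\alpha=1$, $\delta=1-c_2/2$ and $b=c_1+c_2/2$ is a direct witness of \eqref{AS2}.

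For Condition~\ref{Assumption3}, I would exploit the probabilistic interpretation of the aggregated chain as two independent M/M/1 queues with arrival rate $\lambda_i$ and effective service rate $\mu_i c_j/(c_1+c_2)$ for $j\neq i$; the associated loads $\tilde{\rho}_1,\tilde{\rho}_2$ from \eqref{IPMAggr} are strictly less than $1$ by \eqref{Stability}. A product-form geometric Lyapunov function $\overline{u}_{(n_1,n_2)}=\beta_1^{n_1}\beta_2^{n_2}$ with $\beta_i\in(1,1/\tilde{\rho}_i)$ then satisfies \eqref{EqAssumption3} for some $\bar{\delta}<1$ and $\bar{b}<\infty$, with strongly aperiodic state $\bar{\alpha}=(0,0)$ (the diagonal entry $1-\lambda_1-\lambda_2$ of $\bm{I}+\bm{\Gamma}$ is strictly positive by \eqref{uniformization}). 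Condition~\ref{Assumption4} then follows because every row of $\bm{G}^{(1)}$ has at most four non-zero entries, each bounded by a constant, while the product Lyapunov function $\tilde{u}_{\bm{i}k}=\overline{u}_{\bm{i}}u_k$ changes across any single transition of $\bm{G}^{(1)}$ only by a factor in $\{1,\beta_1^{\pm 1},\beta_2^{\pm 1}\}$ times a factor in $\{1,u_1/u_2,u_2/u_1\}$, yielding the uniform bound \eqref{u-Bounded}.

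With all four conditions in place, \cite[Theorem~4.1]{altman2004perturbation} immediately delivers analyticity of $\pi(\varepsilon)$ in a (punctured) neighborhood of $0$ together with \eqref{IPM1}--\eqref{IPM2}. It remains only to establish \eqref{Eq:H_Matrix}. By block-diagonality of $\bm{G}^{(0)}$ the deviation matrix $\bm{H}=\sum_{m\ge 0}\bigl((\bm{I}+\bm{G}^{(0)})^m-\bm{P}^{\ast}\bigr)$ decomposes into identical $2\times 2$ blocks, and on a single block the eigenvalues of $\bm{I}+\bm{C}$ are $1$ and $1-(c_1+c_2)$, so that
\[
(\bm{I}+\bm{C})^m=\bm{P}^{\ast}+(1-c_1-c_2)^m(\bm{I}-\bm{P}^{\ast}),\qquad m\ge 0,
\]
with $\bm{P}^{\ast}$ the rank-one ergodic projection whose rows both equal $(c_2/(c_1+c_2),c_1/(c_1+c_2))$. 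Summing the geometric series gives the block value $(c_1+c_2)^{-1}(\bm{I}-\bm{P}^{\ast})=-\bm{C}/(c_1+c_2)^2$, and reassembling blocks produces exactly \eqref{Eq:H_Matrix}. The main subtlety I foresee is essentially bookkeeping rather than conceptual: choosing $\delta,b,\bar{\delta},\bar{b},\beta_1,\beta_2$ and the $\tilde{\bm{u}}$-boundedness constant in a mutually compatible way so that the quantitative radius-of-convergence estimate supplied by \cite{altman2004perturbation} is strictly positive; the qualitative analyticity statement itself is insensitive to these choices.
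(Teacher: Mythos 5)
Your route is the same as the paper's: verify Conditions \ref{Assumption1}--\ref{Assumption4}, invoke Theorem~4.1 of \cite{altman2004perturbation}, and obtain \eqref{Eq:H_Matrix} from the block-diagonal structure of $\bm{G}^{(0)}$ and the eigenvalues $1$ and $1-(c_1+c_2)$ of $\bm{I}+\bm{C}$. Your verifications of Conditions \ref{Assumption1}, \ref{Assumption2} and \ref{Assumption4} and your computation of $\bm{H}$ are correct (your $\bm{u}=(1,2)'$, $\delta=1-c_2/2$, $b=c_1+c_2/2$ satisfies \eqref{AS2} with equality, just as the paper's $\bm{u}=(1,1+c_1/c_2)'$ does, and the geometric-series/ergodic-projection computation reproduces \eqref{Eq:H_Matrix}).

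The gap is in your verification of Condition \ref{Assumption3}. The inequality \eqref{EqAssumption3} must hold, with one and the same $\bar\delta<1$, at \emph{every} state other than $\bar\alpha=(0,0)$, in particular at all boundary states $(0,n_2)$, $n_2\ge 1$, and $(n_1,0)$, $n_1\ge 1$. With $\overline{u}_{(n_1,n_2)}=\beta_1^{n_1}\beta_2^{n_2}$ one gets at $(0,n_2)$ that $\bigl((\bm{I}+\bm{\Gamma})\overline{\bm u}\bigr)_{(0,n_2)}=\bigl[1+\lambda_1(\beta_1-1)+\lambda_2(\beta_2-1)+\mu_2\tfrac{c_1}{c_1+c_2}(\beta_2^{-1}-1)\bigr]\overline{u}_{(0,n_2)}$; the positive term $\lambda_1(\beta_1-1)$ has no compensating service term in the $n_1$-direction there, so you need $\lambda_1(\beta_1-1)<-\bigl[\lambda_2(\beta_2-1)+\mu_2\tfrac{c_1}{c_1+c_2}(\beta_2^{-1}-1)\bigr]$, and symmetrically on the other axis. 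These two requirements \emph{couple} $\beta_1$ and $\beta_2$, and they are not satisfied for arbitrary $\beta_i\in(1,1/\tilde\rho_i)$: fix any $\beta_1>1$ and let $\beta_2\downarrow 1$; then the bracket at $(0,n_2)$ tends to $1+\lambda_1(\beta_1-1)>1$ uniformly in $n_2$, so no $\bar\delta<1$, $\bar b<\infty$ can work (the $\bar b\,\bm{e}_{\bar\alpha}$ term only covers $(0,0)$). Hence your claim that a product-form geometric function with any $\beta_i\in(1,1/\tilde\rho_i)$ ``satisfies \eqref{EqAssumption3} for some $\bar\delta<1$ and $\bar b<\infty$'' is false as stated, and exhibiting an admissible pair $(\beta_1,\beta_2)$ is exactly the nontrivial part, not deferred bookkeeping. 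The paper resolves it by the specific choice \eqref{Eq:Nu_bar}, i.e.\ $\beta_i=\tilde\rho_i^{-1/2}$, solving the drift relation with equality at the binding boundary; this is why its $\bar\delta$ carries the extra $\max\{\cdot,\cdot\}$ term accounting for the worst boundary states. To repair your argument, either adopt that choice and carry out the boundary computation, or prove directly that the two coupled boundary inequalities admit a solution $(\beta_1,\beta_2)$ under the stated parameter assumptions.
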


\begin{remark}
	We do not discuss the radius of convergence of the series in~\eqref{IPM1}.
	Theorem~4.1 of~\cite{altman2004perturbation}  gives a (rather conservative) bound for the analyticity region.
\end{remark}

\begin{remark}
	The invariant probability measure of the perturbed Markov chain can be calculated by the updating formula
	\begin{equation}
	\pi(\varepsilon)=\pi^{(0)}\left(\bm{I}-\varepsilon \bm{U}\right)^{-1},
	\end{equation}
	with  $\varepsilon$ in a neighborhood of 0, cf. \cite[p.~845, Theorem~4.1]{altman2004perturbation}.
\end{remark}

\begin{remark}
	In order to calculate the deviation matrix $\bm{\Phi}$, one may use the following equations
	\begin{align*}
	\bm{\Phi}\, \bm{\Gamma} &= \bm{\Gamma} \,\bm{\Phi} = \bm{\gamma} - \bm{I},\\
	\bm{\gamma}\, \bm{\Phi} &= \bm{\Phi}\, \bm{\gamma} = \bm{0}.
	\end{align*}
	We briefly describe two approaches to obtain the deviation matrix $\bm{\Phi}$: an analytic one involving PGFs and a numerical one. Both approaches require some additional work. The analytic approach, which involves the consideration of generating functions, leads to a boundary value problem for which we can employ Steps 1 and 2 discussed in Section \ref{sec:JointWork}. Performing these steps reveals a problem similar to the combinatorial random walk in the quadrant with transitions to the West, North, and South-East, cf. \cite[Section~5.2]{Bousquet-Melou}. In order to obtain the expression for $\bm{\Phi}$, we need to invert the obtained PGF.
	A numerical approach is to truncate the state space and solve numerically the corresponding finite system of equations above. We do remark that truncating the state space is a delicate task, since the entries of $\bm{\Phi}$ corresponding to states far from the origin are unbounded.
We do not further investigate this in this paper.
\end{remark}

\subsection{Singular perturbation analysis: verification of the conditions}

It remains to prove that Conditions \ref{Assumption1} - \ref{Assumption4} are satisfied and also to indicate how the deviation matrix of the unperturbed Markov chain, $\bm{H}$, is calculated.

\paragraph{Verification of Condition \ref{Assumption1}}	
As explained in the previous section, this condition follows directly from Equation \eqref{matrixP0}.

\paragraph{Verification of Condition \ref{Assumption2}}
Obviously, all ergodic classes are identical and contain two states ($k=1,2$), thus this condition is trivially met, but for the construction in the remainder it is useful to specify the Lyapunov function used.

First note that the strong aperiodicity follows from the uniformization condition \eqref{uniformization}. We can choose any of the two states as the strongly aperiodic state; in the following we choose $\alpha:=1$. To construct the Lyapunov function first we choose the constants $\delta$  and $b$ as $\delta \in \left(1 - c_2, 1 - \frac{c_1c_2}{c_1 + c_2}\right)$,  $b = 1 - \delta + \frac{c_1^2}{c_2} $. Then we can verify that the Lyapunov function
\begin{equation}\label{Eq:Nu}
\bm{u} =\begin{bmatrix} 1 \\ 1+\frac{c_1}{c_2} \end{bmatrix}
\end{equation}
satisfies  \eqref{AS2}. It also follows that, indeed, $\delta \in (0, 1)$, $0 < b < \infty$ and $u_k \geq 1$, $k= 1, 2$.

\paragraph{Verification of Condition \ref{Assumption3}}
From the definition of the generator of the aggregated Markov chain, cf. \eqref{GGamma}, and the stability condition \eqref{Stability}, it is immediately evident that the aggregated Markov chain is ergodic, since it behaves as two independent ergodic M/M/1 queues with arrival rate $\lambda_i$ and service rate $\frac{\mu_i}{c_i}\frac{c_1c_2}{c_1+c_2}$, $i=1,2$.

Now by using the uniformization condition \eqref{uniformization}, state $(0, 0)$ is strongly aperiodic i.e., we may choose $\bar{\alpha} = (0, 0)$. We proceed to describe the Lyapunov function $\overline{\bm{u}}$ and the constants $\bar{\delta} \in (0, 1)$ and $\bar{b}$ which satisfy Condition \ref{Assumption3}. Note that Equation \eqref{EqAssumption3} is written as follows, for $n_1, n_2 \geq 0$,
\begin{align*}
& \left(1 - \left(\lambda_1 + \lambda_2 + \mu_1 \frac{c_2}{c_1 +c_2}\one_{\{n_1\geq1\}} + \mu_2 \frac{c_1}{c_1 +c_2} \one_{\{n_2\geq1\}}\right) \right)\overline{u}_{(n_1, n_2)} \nonumber \\
&+\lambda_1 \overline{u}_{(n_1 + 1, n_2)} + \lambda_2 \overline{u}_{(n_1, n_2 + 1)} + \mu_1 \frac{c_2}{c_1 +c_2}  \one_{\{n_1\geq1\}} \overline{u}_{(n_1 - 1, n_2)}
 \nonumber\\ &+ \mu_2 \frac{c_1}{c_1 +c_2} \one_{\{n_2\geq1\}}  \overline{u}_{(n_1, n_2-1)}
\leq \bar{\delta}  \overline{u}_{(n_1, n_2)}+ \bar{b}\one_{\{(n_1,n_2)=(0,0)\}} .
\end{align*}
Solving the above equations with equality, after choosing
\begin{equation}\label{Eq:Nu_bar}
\overline{u}_{(n_1, n_2)} = {\left(\sqrt{\frac{\mu_1 c_2}{\lambda_1(c_1 + c_2)}}\right)}^{n_1} {\left(\sqrt{\frac{\mu_2 c_1}{\lambda_2(c_1 + c_2)}}\right)}^{n_2},
\end{equation}
yields the solution for $\bar{\delta}$ and $\bar{b}$. We choose
\begin{align*}
\bar{\delta}  =& 1 - {\left(\sqrt{\lambda_1} - \sqrt{\mu_1 \frac{c_2}{c_1 +c_2}}\right)}^{2} - {\left(\sqrt{\lambda_2} - \sqrt{\mu_2 \frac{c_1}{c_1 +c_2}}\right)}^{2} \nonumber\\
&+ \max{\left\{\mu_2 \frac{c_1}{c_1 +c_2} \left(1- \sqrt{\frac{\lambda_2 (c_1 + c_2)}{\mu_2 c_1}}\right), \mu_1 \frac{c_2}{c_1 +c_2} \left(1- \sqrt{\frac{\lambda_1 (c_1 + c_2)}{\mu_1 c_2}}\right)\right\}},
\end{align*}	
and
\begin{align*}
\bar{b} =  1 - \bar{\delta} + \lambda_1 \left(\sqrt{\frac{\mu_1 c_2}{\lambda_1(c_1 + c_2)}} - 1\right) + \lambda_2\left(\sqrt{\frac{\mu_2 c_1}{\lambda_2(c_1 + c_2)}} - 1\right).
\end{align*}
Note that due to the uniformization condition \eqref{uniformization}, indeed $\bar{\delta} \in (0, 1)$, $0 < \bar{b} < \infty$ and $\overline{u}_{(n_1, n_2)} \geq 1,$ for all $n_1, n_2\geq0.$

\paragraph{Verification of condition \ref{Assumption4}}
To verify this assumption, we apply the definition, cf. \eqref{u-Bounded}, and show that
$$\parallel \bm{G}^{(1)} \parallel_{\tilde{\bm{u}}} \leq \max\{g_1, g_2\},$$
with $g_1 = \left(\frac{\mu_1 c_2}{\lambda_1(c_1 + c_2)}\right)^{-\frac{1}{2}}\left(\mu_1 + \frac{\mu_1 c_2 + \mu_2 c_1}{c_1 + c_2}\right)$ 	and  $g_2 = \left(\frac{\mu_2 c_1}{\lambda_2(c_1 + c_2)}\right)^{-\frac{1}{2}}\left(\mu_2 + \frac{\mu_1 c_2 + \mu_2 c_1}{c_1 + c_2}\right)$.\\
In order to do so, we use the following $\tilde{\bm{u}}$-norm
\[\tilde{u}_{(n_1,n_2,k)}=\overline{u}_{(n_1,n_2)}u_k,\ (n_1,n_2,k)\in\mathcal{S},\]
with $\overline{u}_{(n_1,n_2)}$ given in \eqref{Eq:Nu_bar} and $u_k$ given in \eqref{Eq:Nu}.

\paragraph{Derivation of the deviation matrix of the unperturbed Markov chain}	
It follows from Condition \ref{Assumption1}, that the deviation matrix of the unperturbed Markov chain, $\bm{H}$, has the following block diagonal structure
\begin{equation}\label{H0}
\bm{H} = \begin{bmatrix}
\bm{H}_{2 \times 2} &  \mathbf{0}_{2 \times 2}  & \cdots \\
\mathbf{0}_{2 \times 2} & \bm{H}_{2 \times 2} & \cdots \\
\vdots & \vdots  & \ddots
\end{bmatrix},
\end{equation}
with $\bm{H}_{2 \times 2}$ the deviation matrix of each ergodic class of the unperturbed Markov chain, i.e., \begin{equation}\label{H1}
\bm{H}_{2 \times 2} =  \sum_{j = 0}^{\infty} \left[{(\bm{I} +  \bm{C})}^j - \bm{c}\right],
\end{equation}
with $\bm{C}$ given in \eqref{Eq:G0_matrix} and $\bm{c}$ the ergodic projection of the unperturbed Markov chain given as
\begin{equation*}
\bm{c} = \begin{bmatrix}
\frac{c_2}{c_1 + c_2} & \frac{c_1}{c_1 + c_2} \\
\frac{c_2}{c_1 + c_2} & \frac{c_1}{c_1 + c_2}       	
\end{bmatrix},
\end{equation*}
cf. \cite[p.~64,~Equation~4.1]{spieksma1990geometrically}.	\\
We evaluate \eqref{H1} using the spectral decomposition (eigen-decomposition)
of matrices $\bm{I} + \bm{C}$ and $\bm{c}$; the diagonal matrices containing the eigenvalues are
$\bm{D}_{\bm{I}+\bm{C}}=\mathrm{diag}\{1,1 - (c_1 + c_2) \}=\begin{bmatrix}
1 & 0 \\
0 & 1 - (c_1 + c_2)      	
\end{bmatrix}$
and
$\bm{D}_{\bm{c}}=\mathrm{diag}\{1,0 \}= \begin{bmatrix} 	1 & 0 \\
0 & 0  	\end{bmatrix}$, respectively, and the corresponding matrix of eigenvectors is $\bm{M} = \begin{bmatrix} 1 & -c_1 \\
1 & c_2  \end{bmatrix}$. Naturally, in dimension 2, both matrices produce the same eigenvectors because $\bm{c}$ is the ergodic projection of $\bm{I} + \bm{C}$.
Therefore, Equation \eqref{H1} can be written as
\begin{align}
\bm{H}_{2 \times 2} &= \bm{M} \left(\sum_{m = 0}^{\infty} \left[ \bm{D}_{\bm{I}+\bm{C}}^m - \bm{D}_{\bm{c}} \right] \right) \bm{M}^{-1}\nonumber\\
&=    - \frac{1}{{(c_1 + c_2)}^2}\bm{C}.\label{H4}
\end{align}
Combining \eqref{H4} and \eqref{H0} yields Equation \eqref{Eq:H_Matrix}.


\section{Possible future directions}\label{sec:FutureDir}
We have studied a single server two-queue polling model with a random residing time service discipline. More concretely, we considered that customers arrive at the two queues according to two independent Poisson processes. There is a single server that serves both queues with generally distributed service times. The server spends an exponentially distributed amount of time in each queue. After the completion of this residing time, the server instantaneously switches to the other queue, i.e., there is no switch-over time. A service discipline with a random residing time does not satisfy the so-called branching property \cite{Resing}, which significantly complicates the underlying analysis.

For this polling model, we derived the steady-state marginal workload distribution and
used it to obtain several asymptotic results.
We also discussed the complications arising in the calculation of the joint workload distribution. Furthermore, restricting ourselves to the case of exponential service times, we have calculated the joint queue length distribution using (singular) perturbation analysis. 
It is a topic for further research to determine how to efficiently truncate the
system without inducing too large errors.
The insights gained for the two-queue polling model, specifically for the derivation of the marginal workload, cf. Section \ref{sec:marginal}, can be also used in the case of $N$ queues, $N>2$. In addition, one may generalize the compound Poisson input to a L\'evy subordinator input process.

Another interesting topic for future research is to develop the framework for the derivation of the bivariate LST of the joint workload distribution,
in particular in the asymmetric case, cf. Section \ref{sec:JointWork}, and for the derivation of the bivariate PGF of the joint queue length distribution in the case of exponential service requirements.
In particular, the objective in such a setting is to develop an approach for the transformation of Equation \eqref{eq6.4},
and its version for the asymmetric case,
into a Riemann or Riemann-Hilbert boundary value problem. This requires, that we first choose the zeros of the kernel equation $K(s_1,s_2)$, so as to define a closed smooth contour. Thereafter, we need to show that Equation \eqref{eq6.4} on the contour reduces to the study of an analytic function
(probably with the exception of one pole) with a known boundary condition.
An interesting alternative direction would be to extend the framework developed by Fayolle et al. \cite{fayolle1999}, of the systematic use of the {\em kernel method} using the group of {\em birational transformations} that leave the kernel equation unchanged. The challenge in our case is that the kernel $K(s_1,s_2)$ does not have the regular structure indicated in \cite{fayolle1999}, but this does not seem to impose an insuperable obstacle, see, e.g., \cite{van2008partially}.

\section*{Acknowledgments}
The authors gratefully acknowledge useful discussions with Ahmad Al Hanbali, Offer Kella, Socrates Olympios, Shelly Zacks and Bert Zwart.
The research of Mayank Saxena was funded by the NWO TOP-C1 project of the Netherlands Organisation for Scientific Research.
The research of Onno Boxma was partly done in the framework of the IAP BESTCOM project, funded by the Belgian government;
it was also funded by the NWO Gravitation Program NETWORKS of the Dutch government.
The work of Stella Kapodistria is supported by the NWO Gravitation Program NETWORKS of the Dutch government.


\begin{thebibliography}{10}
	
	\bibitem{AlHanbali}
	A. Al Hanbali, R. de Haan, R. J. Boucherie, and J. C. W. van Ommeren.
	Time-limited polling systems with batch arrivals and phase type service times.
	{\em Annals of Operations Research}, 198(1):57--82, 2012.
	
	\bibitem{altman1994analysing}
	E.~Altman.
	\newblock Analysing timed-token ring protocols using the power series
	algorithm.
	\newblock {\em In : The Fundamental Role of Teletraffic in the Evolution of
		Telecommunications Networks, J. Labetoulle, and J. W. Roberts, editors},
	pages 961--971, 1994.
	
	\bibitem{altman2004perturbation}
	E.~Altman, K.~E. Avrachenkov, and R.~N{\'u}{\~n}ez-Queija.
	\newblock Perturbation analysis for denumerable {M}arkov chains with
	application to queueing models.
	\newblock {\em Advances in Applied Probability}, 36(3):839--853, 2004.

	\bibitem{Altmanetal}
	E. Altman, P. Konstantopoulos, and Z. Liu.
	Stability, monotonicity and invariant quantities in general polling systems.
	{\em Queueing Systems}, 11(1):35--57, 1992.
	
	\bibitem{avi-naor63}
	B.~Avi-Itzhak and P.~Naor.
	\newblock Some queuing problems with the service station subject to breakdown.
	\newblock {\em Operations Research}, 11(3):303--320, 1963.
	

	\bibitem{Baccelli}
	F. Baccelli and A. M. Makowski.
	Stability and bounds for single server queues in random environment.
	{\em Stochastic Models}, 2(2):281--291, 1986.

	\bibitem{bingham1989regular}
	N.~H. Bingham, C.~M. Goldie, and J.~L. Teugels.
	\newblock {\em Regular Variation}, volume~27.
	\newblock Cambridge University Press, 1989.
	
	\bibitem{Boon-Mei-Winards}
	M.~A.~A. Boon, R.~D. van~der Mei, and E.~M.~M. Winands.
	\newblock Applications of polling systems.
	\newblock {\em Surveys in Operations Research and Management Science (SORMS)},
	16:67--82, 2011.
	
	\bibitem{Bousquet-Melou}
	M.~Bousquet-M{\'e}lou and M.~Mishna.
	\newblock Walks with small steps in the quarter plane.
	\newblock {\em Contemporary Mathematics}, 520:1--40, 2010.
	
	\bibitem{boxma1989workloads}
	O.~J. Boxma.
	\newblock Workloads and waiting times in single-server systems with multiple
	customer classes.
	\newblock {\em Queueing Systems}, 5(1-3):185--214, 1989.
	
	\bibitem{boxma1999heavy}
	O.~J. Boxma and J.~W. Cohen.
	\newblock Heavy-traffic analysis for the {GI/G/1} queue with heavy-tailed
	distributions.
	\newblock {\em Queueing Systems}, 33(1-3):177--204, 1999.

        \bibitem{BDR}
        O.~J. Boxma, Q. Deng, and J.~A.~C. Resing.
        \newblock Polling systems with regularly varying service and/or switchover times.
        \newblock {\em Advances in Performance Analysis}, 3:71--107, 2000.

        \bibitem{BoxmaVanHoutum}
        O.~J. Boxma and G.~J. van Houtum.
        \newblock The compensation approach applied to a $2 \times 2$ switch.
        \newblock {\em Probability in the Engineering and Informational Sciences}, 7:471--493, 1993.
	
	\bibitem{BoxmaKurkova}
	O.~J. Boxma and I.~A. Kurkova.
	\newblock The {M/G/1} queue with two service speeds.
	\newblock {\em Advances in Applied Probability}, 33:520--540, 2001.
	

	\bibitem{Cao}
	J. Cao and W. Xie.
	Stability of a two-queue cyclic polling system with BMAPs under gated service and state-dependent time-limited service disciplines.
	{\em Queueing Systems}, 85(1-2):117--147, 2017.

	\bibitem{10.2307/3212351}
	J.~W. Cohen.
	\newblock Some results on regular variation for distributions in queueing and
	fluctuation theory.
	\newblock {\em Journal of Applied Probability}, 10:343--353, 1973.
	
	\bibitem{cohen2012single}
	J.~W. Cohen.
	\newblock {\em The Single Server Queue}.
	\newblock Elsevier, 2012.
	
	\bibitem{cohen2000boundary}
	J.~W. Cohen and O.~J. Boxma.
	\newblock {\em Boundary Value Problems in Queueing System Analysis}, volume~79.
	\newblock Elsevier, 2000.
	


	\bibitem{Coffmanetal}
	E.~G. Coffman, G. Fayolle, and I. Mitrani.
	Two queues with alternating service periods.
	 {\em In : Performance'87, P.~J. Courtois and G. Latouche, editors}, pages 227--239, 1988.
	
	\bibitem{courtois2014decomposability}
	P.~J. Courtois.
	\newblock {\em Decomposability: Queueing and Computer System Applications}.
	\newblock Academic Press, 2014.

	
	\bibitem{de2009polling}
	R.~de~Haan, R.~J. Boucherie, and J.~C.~W. van Ommeren.
	\newblock A polling model with an autonomous server.
	\newblock {\em Queueing Systems}, 62(3):279--308, 2009.
	
	\bibitem{delebecque1983reduction}
	F.~Delebecque.
	\newblock A reduction process for perturbed {M}arkov chains.
	\newblock {\em SIAM Journal on Applied Mathematics}, 43(2):325--350, 1983.
	
	\bibitem{EliazarYechaili}
	I.~Eliazar and U.~Yechiali.
	\newblock Polling under the randomly timed gated regime.
	\newblock {\em Stochastic Models}, 14(1--2):79--93, 1998.
	

	\bibitem{fayolle1999}
	G.~Fayolle, R.~Iasnogorodski, and V.~Malyshev.
	\newblock {\em Random Walks in the Quarter-Plane: Algebraic Methods, Boundary
		Value Problems and Applications}, volume 40 of {\em Applications of
		Mathematics}.
	\newblock Springer--Verlag, 1999.
	
	\bibitem{feder-green86}
	A.~Federgruen and L.~Green.
	\newblock Queueing systems with service interruptions.
	\newblock {\em Operations Research}, 34(5):752--768, 1986.
	
	\bibitem{fiems-maer-brun08}
	D.~Fiems, T.~Maertens, and H.~Bruneel.
	\newblock Queueing systems with different types of server interruptions.
	\newblock {\em European Journal of Operational Research}, 188(3):838--845,
	2008.
	
	\bibitem{fuhrmancooper85}
	S.~W. Fuhrmann and R.~B. Cooper.
	\newblock Stochastic decompositions in the {M}/{G}/1 queue with generalized
	vacations.
	\newblock {\em Operations Research}, 33(5):1117--1129, 1985.
	
	
	\bibitem{gaver62}
	D.~P. Gaver.
	\newblock A waiting line with interrupted service, including priorities.
	\newblock {\em Journal of the Royal Statistical Society. Series B
		(Methodological)}, 24(1):73--90, 1962.
	
	\bibitem{van2008partially}
	E.~J. Jance~van Rensburg, T.~Prellberg, and A.~Rechnitzer.
	\newblock Partially directed paths in a wedge.
	\newblock {\em Journal of Combinatorial Theory, Series A}, 115(4):623--650,
	2008.
	
	\bibitem{Katayama2001}
	T.~Katayama.
	Waiting time analysis for a queueing system with time-limited service and exponential timer.
	{\em Naval Research Logistics}, 48(7):638--651, 2001.
	
	\bibitem{kramer2002ipact}
	G.~Kramer, B.~Mukherjee, and G.~Pesavento.
	\newblock {I}{P}{A}{C}{T} a dynamic protocol for an {E}thernet {P}{O}{N}
	({E}{P}{O}{N}).
	\newblock {\em IEEE Communications Magazine}, 40(2):74--80, 2002.
	
	\bibitem{krishna-etal14}
	A.~Krishnamoorthy, P.~Pramod, and S.~Chakravarthy.
	\newblock Queues with interruptions: a survey.
	\newblock {\em TOP: An Official Journal of the Spanish Society of Statistics
		and Operations Research}, 22(1):290--320, 2014.
	
	\bibitem{levy1991polling}
	H.~Levy and M.~Sidi.
	\newblock Polling models: applications, modelling and optimization.
	\newblock {\em IEEE Transactions on Communications}, 38(10):1750--1760, 1990.
	
	\bibitem{Meyn}
	S. P. Meyn and R. L. Tweedie.
	{\em Markov Chains and Stochastic Stability}.
	Springer-Verlag, 1993.

	\bibitem{miller64}
	L.~W. Miller.
	\newblock {\em Alternating Priorities in Multi-Class Queues}.
	\newblock PhD thesis, Cornell University, 1964.
	
	\bibitem{pervozvanskii2013theory}
	A.~A. Pervozvanskii and V.~G. Gaitsgori.
	\newblock {\em Theory of Suboptimal Decisions: Decomposition and Aggregation},
	volume~12.
	\newblock Springer Science \& Business Media, 2013.
	
	\bibitem{Resing}
	J.~A.~C. Resing.
	\newblock Polling systems and multitype branching processes.
	\newblock {\em Queueing Systems}, 13(4):409--426, 1993.
	
	\bibitem{sigman1999appendix}
	K.~Sigman.
	\newblock Appendix: a primer on heavy-tailed distributions.
	\newblock {\em Queueing Systems}, 33(1-3):261--275, 1999.
	
	\bibitem{spieksma1990geometrically}
	F.~M. Spieksma.
	\newblock {\em Geometrically Ergodic Markov Chains and the Optimal Control of
		Queues}.
	\newblock PhD thesis, Leiden University, 1990.
	
	\bibitem{takagi1988queuing}
	H.~Takagi.
	\newblock Queuing analysis of polling models.
	\newblock {\em ACM Computing Surveys (CSUR)}, 20(1):5--28, 1988.
	
	\bibitem{Takagi-91}
	H.~Takagi.
	\newblock Application of polling models to computer networks.
	\newblock {\em Computer Networks and ISDN systems}, 22(3):193--211, 1991.
	
	\bibitem{Takagi-book}
	H.~Takagi.
	\newblock {\em Queueing Analysis, Vol. 1: Vacation and Priority Systems}.
	\newblock North-Holland, 1991.
	
	\bibitem{Takagi-00}
	H.~Takagi.
	\newblock Analysis and application of polling models.
	\newblock In {\em Performance Evaluation: Origins and Directions. In G. Haring,
		C. Lindemann, and M. Reiser, editors}, pages 423--442. Springer, 2000.
	
	\bibitem{takinesengupta97}
	T.~Takine and B.~Sengupta.
	\newblock A single server queue with service interruptions.
	\newblock {\em Queueing Systems}, 26(3):285--300, 1997.
	
	\bibitem{thiruvengadam63}
	K.~Thiruvengadam.
	\newblock Queuing with breakdowns.
	\newblock {\em Operations Research}, 11(1):62--71, 1963.
	
	\bibitem{vishnevskii2006mathematical}
	V.~M. Vishnevskii and O.~V. Semenova.
	\newblock Mathematical methods to study the polling systems.
	\newblock {\em Automation and Remote Control}, 67(2):173--220, 2006.
	
	\bibitem{whitechristie58}
	H.~White and L.~S. Christie.
	\newblock Queuing with preemptive priorities or with breakdown.
	\newblock {\em Operations Research}, 6(1):79--95, 1958.
	
	\bibitem{xie1997workload}
	J.~Xie, M.~J. Fischer, and C.~M. Harris.
	\newblock Workload and waiting time in a fixed-time loop system.
	\newblock {\em Computers \& Operations Research}, 24(8):789--803, 1997.
	
	\bibitem{zwart2001queueing}
	A.~P. Zwart.
	\newblock {\em Queueing Systems with Heavy Tails}.
	\newblock PhD thesis, Eindhoven University of Technology, 2001.
\end{thebibliography}
\end{document}